\numberwithin{equation}{section}
\newtheorem{thm}{Theorem}[section]
\newtheorem{lem}[thm]{Lemma}
\begin{document}

\title[Burkholder-Gundy-Davis Inequality in Variable Exponent Spaces]
{Burkholder-Gundy-Davis Inequality in Martingale \\Hardy Spaces with Variable Exponent}
\date{May 17, 2014}

\author[P. Liu]{Peide Liu}
\address{School of Mathematics and Statistics, Wuhan
University, Wuhan 430072, P.R.CHINA}
\email {pdliu@whu.edu.cn}

\author[M. Wang]{Maofa Wang}
\address{School of Mathematics and Statistics, Wuhan
University, Wuhan 430072, P.R.CHINA}
\email{mfwang.math@whu.edu.cn}

\thanks{This project was supported by the  NSFC (11471251, 11271293)}
\subjclass[2010]{Primary 46E30; Secondary 60G46} \keywords{variable
exponent Lebesgue space, martingale inequality,  Dellacherie
theorem, Burkholder-Gundy-Davis inequality, Chevalier inequality}

\newcommand{\C}{\mathbb{C}}
\newcommand{\Cn}{\mathbb{C}^n}
\newcommand{\Cm}{\mathbb{C}^m}
\newcommand{\Sn}{\mathbb{S}_n}
\newcommand{\Sm}{\mathbb{S}_m}
\newcommand{\D}{\mathbb{D}}
\newcommand{\T}{\mathbb{T}}
\newcommand{\Bm}{\mathbb{B}_m}
\newcommand{\Bn}{\mathbb{B}_n}
\newcommand{\B}{\mathbb{B}_1}
\newcommand{\K}{\mathcal D_\beta^n}

\begin{abstract}
In this paper, the classical Dellacherie's theorem about stochastic
process is extended to variable exponent Lebesgue spaces. As its
applications, we obtain variable exponent analogues of several
famous inequalities in classical martingale theory, including
convexity lemma, Burkholder-Gundy-Davis' inequality and Chevalier's
inequality. Moreover, we investigate some other equivalent relations
between variable exponent martingale Hardy spaces.
\end{abstract}

\maketitle

\section{Introduction}

Due to their important role in elasticity, fluid dynamics, calculus
of variations, differential equations and so on, Musielak-Orlicz
spaces and their special case, variable exponent Lebesgue spaces
have been got more and more attention in modern analysis and
functional space theory. In particular, Musielak-Orlicz spaces were
studied by Orlicz and Musielak \cite{mu}. Hudzik \cite{h2} studied
some geometry properties of Musielak-Orlicz spaces. Kovacik and
Rakosnik \cite{k}, Fan and Zhao \cite{f} investigated various
properties of variable exponent Lebesgue spaces and Sobolev spaces.
Diening \cite{d3} and Cruz-Uribe et el \cite{c3,c4} proved the
boundedness of Hardy-Littlewood maximal operator on variable
exponent Lebesgue function spaces $L^{p(x)}(R^n)$ under the
conditions that the exponent $p(x)$ satisfies so called
log-H$\ddot{o}$lder continuity and decay restriction. Many other
authors studied its applications to harmonic analysis and some other
subjects.

As we well known, the situation of martingale spaces is different
from function spaces. For example, the log-H$\ddot{o}$lder
continuity of a measurable function on a probability space can not
be defined. Moreover, generally speaking, the "good $-\lambda$"
inequality method used in classical martingale theory can not be
used in variable exponent case. However, recently, variable exponent
martingale spaces have been paid more attention too. Among others,
Aoyama \cite{a} proved some inequalities under the condition that
the exponent $p$ is $\Sigma_0-$measurable. Nakai and Sadasue
\cite{n1} pointed out that the $\Sigma_0-$measurability is not
necessary for the boundedness of Doob's maximal operator, and proved
that the boundedness holds when every $\sigma-$algebra is generated
by countable atoms.

The aim of this paper is to establish some variable exponent
analogues of several famous inequalities in classical martingale
theory. By extending Dellacherie's theorem to variable exponent case
we obtain convexity Lemma and Burkholder-Gundy-Davis' inequality and
Chevalier's inequality for variable exponent martingale Hardy
spaces. Then we investigate some equivalent relations between
several variable exponent martingale Hardy spaces, specially, we
prove that the two predictable martingale spaces
$\mathcal{D}_{p(\cdot)}$ and $\mathcal{Q}_{p(\cdot)}$ are equivalent
and under regular condition the five martingale Hardy spaces
$H^*_{p(\cdot)},~ H^S_{p(\cdot)},~ H^s_{p(\cdot)}, ~
\mathcal{D}_{p(\cdot)}$ and $\mathcal{Q}_{p(\cdot)}$ with variable
exponent $1\leq p^-\leq p^+<\infty$ are equivalent (for their
definitions, see below).

\vskip .3cm

Let $(\Omega,\Sigma, \mu)$ be a non-atomic complete probability
space, $L^0(\Omega)$ the set of all measurable functions (i.e. r.v.)
on $\Omega$, and $E$ the expectation with respect to $\Sigma.$ We
say that $p\in \mathcal{P},$ if $p\in L^0(\Omega)$ with $1\leq
p(\omega)\leq \infty.$ For $p\in \mathcal{P},$ denote
$\Omega_\infty=\{\omega\in \Omega, p(\omega)=\infty\},$ and define
variable exponent Lebesgue space as follows:
$$L^{p(\cdot)}=\{u\in L^0(\Omega): \exists \gamma>0, ~\rho_{p(\cdot)}(\gamma u)
<\infty\},$$
where the modular
\begin{align}\label{1.1} \rho_{p(\cdot)}(u)=\int_{\Omega\setminus\Omega_\infty}|u(\omega)|^{p(\omega)}d\mu+ ess\sup_{\omega\in\Omega_\infty}
|u(\omega)|.\end{align}
For every $u\in L^{p(\cdot)},$
its Luxemburg norm is defined by
\begin{align}\label{1.2}
\|u\|_{p(\cdot)}=\inf \{\gamma>0: \rho_{p(\cdot)}(\frac{u}{\gamma})\leq1\}.
\end{align}
We denote by $p^+$ and $p^-$ the below index and upper index of $p,$
i.e.,
$$p^-= {ess\inf} _{\omega\in\Omega} \,p(\omega), \ \ \ \  p^+=
{ess\sup} _{\omega\in\Omega}  \, p(\omega), $$ and $p$'s conjugate
index is $ p'(\omega)$, i.e., $\frac{1}{p(\omega)}+
\frac{1}{p'(\omega)}=1. $

Here we mention some basic properties of $L^{p(\cdot)},$ the proofs
of which are standard and similar to classical function spaces, for
example, see \cite{f,k}.

\begin{lem}
\label{lem11} ~~   Let $p\in \mathcal{P}$ with $p^+<\infty,$
then\begin{itemize}

\item[(1)]~ $\rho_{p(\cdot)}(u)<1 ~(=1, >1) $ if and only if $
\|u\|_{p(\cdot)}<1 ~(=1, >1).$
\item[(2)]~ $\rho_{p(\cdot)}(\frac{u}{\|u\|_{p(\cdot)}})=1, \forall u\in
L^{p(\cdot)}$ with $0<\|u\|_{p(\cdot)}<\infty.$
\item[(3)]~ $\rho_{p(\cdot)}(u)\leq \|u\|_{p(\cdot)}$, if $
\|u\|_{p(\cdot)}\leq1.$
\item[(4)]~ $(L^{p(\cdot)},\|\cdot\|_{p(\cdot)})$ is a Banach space.
\item[(5)]~ If $u\in L^{p(\cdot)}, v\in L^{p'(\cdot)},$ then
\begin{align}\label{1.3}|Eu v |\leq C\|u\|_{p(\cdot)}\|v\|_{p'(\cdot)},\end{align}\end{itemize}
where $C$ is a positive constant depending only on $p.$
\begin{itemize}\item[(6)]~ If $u_n\in L^{p(\cdot)},$ then
$\|u_n-u\|_{p(\cdot)}\rightarrow0$ if and only if
$\rho_{p(\cdot)}(u_n-u)\rightarrow0.$
\end{itemize}
\end{lem}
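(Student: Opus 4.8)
The plan is to base the entire lemma on the behaviour of the one–variable function $\phi(\gamma)=\rho_{p(\cdot)}(u/\gamma)$, $\gamma>0$, together with two elementary scaling estimates for the modular. First I would record the reduction that, since $p^+<\infty$, one has $\mu(\Omega_\infty)=0$, so the ess-sup term in \eqref{1.1} disappears and $\rho_{p(\cdot)}(u)=\int_\Omega|u(\omega)|^{p(\omega)}\,d\mu$ is a genuine integral modular. Then I would establish the scaling inequalities
\[
\rho_{p(\cdot)}(\lambda u)\le \lambda^{p^-}\rho_{p(\cdot)}(u)\ \ (0\le\lambda\le1),\qquad \rho_{p(\cdot)}(\lambda u)\le \lambda^{p^+}\rho_{p(\cdot)}(u)\ \ (\lambda\ge1),
\]
which follow pointwise from $\lambda^{p(\omega)}\le\lambda^{p^-}$ when $\lambda\le1$ and $\lambda^{p(\omega)}\le\lambda^{p^+}$ when $\lambda\ge1$, using $p^-\le p(\omega)\le p^+$; together with convexity of $t\mapsto t^{p(\omega)}$ these are the only facts about the exponent the whole lemma needs.

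For (1) and (2) I would show that $\phi$ is continuous and strictly decreasing on the interval where it is finite, with $\phi(\gamma)\to0$ as $\gamma\to\infty$ and $\phi(\gamma)\to\infty$ as $\gamma\to0^+$ for $u\ne0$. Continuity is the one place where $p^+<\infty$ is essential: writing $\phi(\gamma)=\int_\Omega|u|^{p(\omega)}\gamma^{-p(\omega)}\,d\mu$, on any compact interval $[a,b]\subset(0,\infty)$ the integrands are uniformly dominated by an integrable function (finiteness of $p^+$ keeps $\gamma^{-p(\omega)}$ controlled), so dominated convergence applies. Continuity and monotonicity then produce a unique $\gamma^\ast$ with $\phi(\gamma^\ast)=1$, and \eqref{1.2} forces $\{\gamma:\phi(\gamma)\le1\}=[\gamma^\ast,\infty)$, hence $\|u\|_{p(\cdot)}=\gamma^\ast$ and $\rho_{p(\cdot)}(u/\|u\|_{p(\cdot)})=1$, which is (2). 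Statement (1) then follows by combining monotonicity of $\phi$ with the scaling inequalities to sharpen non-strict comparisons; e.g. if $\|u\|_{p(\cdot)}<1$ one picks $\gamma_0\in(\|u\|_{p(\cdot)},1)$ with $\phi(\gamma_0)\le1$ and writes $u=\gamma_0(u/\gamma_0)$ to get $\rho_{p(\cdot)}(u)\le\gamma_0^{p^-}<1$. The same computation with $\|u\|_{p(\cdot)}\le1$ yields $\rho_{p(\cdot)}(u)\le\|u\|_{p(\cdot)}^{p^-}\le\|u\|_{p(\cdot)}$, which is (3).

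For (5) I would normalize to $\|u\|_{p(\cdot)}=\|v\|_{p'(\cdot)}=1$ by homogeneity, apply Young's inequality $|uv|\le |u|^{p(\omega)}/p(\omega)+|v|^{p'(\omega)}/p'(\omega)$ pointwise, integrate, and bound $1/p(\omega)\le1/p^-$, $1/p'(\omega)\le1/(p')^-$, obtaining \eqref{1.3} with $C=1/p^-+1/(p')^-\le2$. For (6) the scaling inequalities upgrade, via (2) applied to $v/\|v\|_{p(\cdot)}$, to the two-sided bound $\|v\|_{p(\cdot)}^{p^+}\le\rho_{p(\cdot)}(v)\le\|v\|_{p(\cdot)}^{p^-}$ valid when $\|v\|_{p(\cdot)}\le1$: the left bound gives $\|v\|_{p(\cdot)}\le\rho_{p(\cdot)}(v)^{1/p^+}\to0$ whenever the modular tends to $0$, and the right bound (equivalently (3)) gives the converse, with $v=u_n-u$. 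This is again exactly where $p^+<\infty$ is needed, since $\rho_{p(\cdot)}(v)^{1/p^+}$ degenerates otherwise.

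The part I expect to require the most care is completeness in (4). That $\|\cdot\|_{p(\cdot)}$ is a norm is routine: definiteness and homogeneity are immediate, and the triangle inequality is the Minkowski inequality obtained from convexity of the modular exactly as in the classical case (if $\|u\|_{p(\cdot)}=a$, $\|v\|_{p(\cdot)}=b$, convexity gives $\rho_{p(\cdot)}((u+v)/(a+b))\le\frac{a}{a+b}\rho_{p(\cdot)}(u/a)+\frac{b}{a+b}\rho_{p(\cdot)}(v/b)\le1$). For completeness I would run the Riesz--Fischer argument: it suffices to show every series $\sum_k u_k$ with $\sum_k\|u_k\|_{p(\cdot)}<\infty$ converges in norm. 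Using (3) and (6) to pass between norm and modular and Fatou's lemma for $\rho_{p(\cdot)}$, I would show the partial sums converge a.e.\ to a limit lying in $L^{p(\cdot)}$ and that convergence also holds in norm. The bookkeeping between modular and norm, rather than any single hard estimate, is the main obstacle here.
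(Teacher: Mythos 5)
The paper does not actually prove this lemma: it is stated as a list of known facts, with the proofs explicitly deferred to the references \cite{f,k} (``the proofs of which are standard and similar to classical function spaces''). Your argument is essentially the standard one found in those references --- reduce to the integral modular via $\mu(\Omega_\infty)=0$, analyze the continuous, strictly decreasing function $\phi(\gamma)=\rho_{p(\cdot)}(u/\gamma)$ to get (1)--(2), use the scaling inequalities for (3) and (6), Young's inequality for (5), and Riesz--Fischer for (4) --- so there is no methodological divergence to report, and the proof is correct in outline. Two points, however, need more care than you give them. First, in (5) the hypothesis $p^+<\infty$ does not prevent $p'(\omega)=\infty$ on the set $\{p(\omega)=1\}$: there your pointwise Young inequality degenerates to $|uv|\le|u|$, which is only valid once you know $|v|\le1$ a.e.\ on that set; this comes from the ess-sup term in the modular $\rho_{p'(\cdot)}$ together with the fact $\rho_{p'(\cdot)}\bigl(v/\|v\|_{p'(\cdot)}\bigr)\le1$, and that fact cannot be quoted from your items (1)--(3), which you proved under a finite upper index (here $(p')^+=\infty$); instead it follows from monotone convergence as $\gamma\downarrow\|v\|_{p'(\cdot)}$. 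Second, the left-hand bound $\|v\|_{p(\cdot)}^{p^+}\le\rho_{p(\cdot)}(v)$ used in (6) requires the reverse scaling inequality $\rho_{p(\cdot)}(\lambda v)\ge\lambda^{p^+}\rho_{p(\cdot)}(v)$ for $0\le\lambda\le1$, which you never state (you only record the upper bounds); it follows from the same pointwise comparison $\lambda^{p(\omega)}\ge\lambda^{p^+}$, but it must be said, and one must also note that $\rho_{p(\cdot)}(u_n-u)\to0$ forces $\|u_n-u\|_{p(\cdot)}<1$ eventually (via (1)) before the bound applies. With these repairs, and with the Riesz--Fischer argument for (4) actually written out rather than sketched, your proof is a complete substitute for the citation.
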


\begin{lem}
\label{lem12} Let $p\in \mathcal{P}$ and $s>0$ such that $sp^-\geq
1,$ then
\begin{align}\label{1.4}  \||u|^s\|_{p(\cdot)}=
\|u\|^s_{sp(\cdot)}. \end{align}
\end{lem}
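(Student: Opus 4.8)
The plan is to deduce the norm identity \eqref{1.4} from a single scaling identity at the level of the modular, and then pass to the infimum that defines the Luxemburg norm. First I would record that $q:=sp$ again lies in $\mathcal{P}$: the hypothesis $sp^-\ge 1$ gives $q(\omega)=sp(\omega)\ge sp^-\ge 1$ a.e., so $\|\cdot\|_{sp(\cdot)}$ is a bona fide Luxemburg norm and the right-hand side of \eqref{1.4} is meaningful (this is exactly where the hypothesis $sp^-\ge1$ enters). Moreover, since $s\in(0,\infty)$ is finite, the set $\{\omega:sp(\omega)=\infty\}$ coincides with $\Omega_\infty=\{\omega:p(\omega)=\infty\}$, so both modulars split over the same pair $\Omega\setminus\Omega_\infty$ and $\Omega_\infty$.

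The core step is the substitution $\gamma=\lambda^s$. On $\Omega\setminus\Omega_\infty$ one has the pointwise identity $\bigl(|u(\omega)|^s/\lambda^s\bigr)^{p(\omega)}=\bigl(|u(\omega)|/\lambda\bigr)^{sp(\omega)}$, so the integral terms of $\rho_{p(\cdot)}(|u|^s/\lambda^s)$ and $\rho_{sp(\cdot)}(u/\lambda)$ agree. Granting for the moment that the two full modulars coincide, i.e. $\rho_{p(\cdot)}(|u|^s/\lambda^s)=\rho_{sp(\cdot)}(u/\lambda)$ for every $\lambda>0$, the admissible sets in the two Luxemburg infima are related by $\gamma\mapsto\gamma^{1/s}$:
\[
\{\gamma>0:\rho_{p(\cdot)}(|u|^s/\gamma)\le 1\}=\{\lambda^s:\lambda>0,\ \rho_{sp(\cdot)}(u/\lambda)\le 1\}.
\]
Since $t\mapsto t^s$ is continuous and strictly increasing on $(0,\infty)$, the infimum of the left set equals the $s$-th power of the infimum of $\{\lambda:\rho_{sp(\cdot)}(u/\lambda)\le1\}$, which is precisely $\||u|^s\|_{p(\cdot)}=\|u\|_{sp(\cdot)}^s$. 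The degenerate cases $u=0$ and $u\notin L^{sp(\cdot)}$ fall out of the same bookkeeping, the admissible sets being all of $(0,\infty)$ or empty, respectively.

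The delicate point — and the step I expect to be the real obstacle — is the essential-supremum term over $\Omega_\infty$, which is attached additively and does not carry the exponent $p(\omega)$. Under $\gamma=\lambda^s$ the finite part transforms covariantly, but the $\Omega_\infty$ contributions read $\lambda^{-s}\bigl(\mathrm{ess\,sup}_{\Omega_\infty}|u|\bigr)^s$ on the left versus $\lambda^{-1}\,\mathrm{ess\,sup}_{\Omega_\infty}|u|$ on the right, so the modular scaling is not automatic once $\Omega_\infty$ and its complement both carry positive measure. I would therefore isolate this term and argue by cases: when $p<\infty$ $\mu$-a.e.\ (in particular whenever $p^+<\infty$, the setting of the paper's main results) the set $\Omega_\infty$ is null, the essential-supremum term disappears, and the substitution argument of the second paragraph applies verbatim; the purely infinite case $\mu(\Omega_\infty)=\mu(\Omega)$ reduces to $\||u|^s\|_\infty=\|u\|_\infty^s$, which is immediate because $t\mapsto t^s$ commutes with essential suprema. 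Pinning down exactly this junction — reconciling the additive $L^\infty$-type term with the $s$-homogeneous scaling forced by the integral term — is where the proof must be made precise.
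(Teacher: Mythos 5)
The paper never actually proves this lemma: it sits in the block of ``basic properties of $L^{p(\cdot)}$ \dots the proofs of which are standard,'' with a pointer to \cite{f,k}. So the only comparison available is with the standard argument, and your substitution step \emph{is} that argument: $q=sp$ lies in $\mathcal{P}$ because $sp^-\ge1$; on $\Omega\setminus\Omega_\infty$ the pointwise identity $(|u|^s/\lambda^s)^{p(\omega)}=(|u|/\lambda)^{sp(\omega)}$ makes the integral parts of the two modulars equal; and since $\gamma\mapsto\gamma^{1/s}$ is an increasing bijection of $(0,\infty)$, the Luxemburg infimum for $|u|^s$ in $L^{p(\cdot)}$ is the $s$-th power of the one for $u$ in $L^{sp(\cdot)}$. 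That settles the lemma whenever $\mu(\Omega_\infty)=0$ (in particular whenever $p^+<\infty$), and the purely infinite case is, as you say, just $\||u|^s\|_\infty=\|u\|_\infty^s$.

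The point you flagged as the ``real obstacle'' is worse than delicate: with the paper's additive modular \eqref{1.1}, the identity \eqref{1.4} is simply \emph{false} when $\Omega_\infty$ and its complement both have positive measure, so no argument can close that case. Take $\Omega=A\cup B$ with $\mu(A)=\mu(B)=\tfrac12$, $p=1$ on $A$, $p=\infty$ on $B$ (so $\Omega_\infty=B$), $s=2$, and $u\equiv1$. Then
\[
\rho_{p(\cdot)}\!\left(\frac{|u|^2}{\gamma}\right)=\frac{1}{2\gamma}+\frac{1}{\gamma}=\frac{3}{2\gamma},
\qquad\text{hence}\qquad \||u|^2\|_{p(\cdot)}=\frac32,
\]
whereas
\[
\rho_{2p(\cdot)}\!\left(\frac{u}{\lambda}\right)=\frac{1}{2\lambda^2}+\frac{1}{\lambda}\le1
\iff \lambda\ge\frac{1+\sqrt3}{2},
\qquad\text{hence}\qquad \|u\|_{2p(\cdot)}^2=\frac{2+\sqrt3}{2}\ne\frac32.
\]
So your case analysis is in fact the strongest true statement: the lemma requires the additional hypothesis that $p<\infty$ a.e.\ (or else the $\Omega_\infty$-part of the modular must be imposed as the constraint $\operatorname{ess\,sup}_{\Omega_\infty}|u/\lambda|\le1$ rather than added to the integral, the convention under which the unrestricted identity does hold, since raising to the power $s$ does not change that constraint). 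This does no damage to the paper: the only place Lemma~\ref{lem12} is invoked is the proof of Lemma~\ref{25}, where $2\le p\le p^+<\infty$ is assumed, so $\Omega_\infty$ is null there and your first case applies.
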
~~ {\it
}

\begin{lem}
\label{lem13} ~~ {\it Let $p, q\in \mathcal{P},$ then
$L^{p(\cdot)}\subset L^{q(\cdot)}$ if and only if $p(\omega)\geq
q(\omega)~ a.e.$, and in this case the embedding is continuous with
\begin{align}\label{1.5} \|f\|_{q(\cdot)}\leq 2\|f\|_{p(\cdot)},~~ \forall f\in
L^{p(\cdot)}.\end{align} }\end{lem}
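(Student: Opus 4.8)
The plan is to prove the two implications separately, the real content being the sufficiency together with the sharp constant $2$. Throughout I avoid the properties collected in Lemma~\ref{lem11}, since those presuppose $p^+<\infty$, and instead use only the elementary scaling estimate $\rho_{q(\cdot)}(u/\gamma)\le\gamma^{-1}\rho_{q(\cdot)}(u)$ for $\gamma\ge1$, which is immediate because $q\ge1$ forces $\gamma^{q}\ge\gamma$ on $\{q<\infty\}$ while the $\mathrm{ess\,sup}$ term scales exactly. For necessity I argue by contraposition: if $\mu(\{q>p\})>0$ then, decomposing $\{q>p\}=\bigcup_m\{\,p\le m,\ q\ge p+1/m\,\}$, some piece $G$ has $\mu(G)>0$ with $p\le P:=m<\infty$ and $q\ge p+1/m$ on $G$. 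Since $\mu$ is non-atomic I split a subset of $G$ on which $q<\infty$ into disjoint sets $E_n$ with $\mu(E_n)=a_n$ and set $f=t_n$ on $E_n$ and $f=0$ elsewhere, choosing $t_n\uparrow\infty$ and the $a_n$ so that $\sum_n t_n^{P}a_n<\infty$ while $\sum_n t_n^{\,P+1/m}a_n=\infty$ (for instance $t_n^{P}a_n=2^{-n}$ with $t_n\to\infty$ fast). The first sum bounds $\rho_{p(\cdot)}(f)$ from above, so $f\in L^{p(\cdot)}$, while for every $c>0$ the estimate $\int_{E_n}(ct_n)^{q}d\mu\ge (ct_n)^{p+1/m}a_n$ (valid once $ct_n\ge1$) forces $\rho_{q(\cdot)}(cf)=\infty$, so $f\notin L^{q(\cdot)}$. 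The degenerate case where $q=\infty$ on a positive–measure subset of $\{p<\infty\}$ is even simpler, as any unbounded $f$ supported there lies in $L^{p(\cdot)}\setminus L^{q(\cdot)}$. This contradicts the inclusion and yields $p\ge q$ a.e.

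For sufficiency assume $p\ge q$ a.e. By homogeneity of the norm it suffices, after replacing $f$ by $g=f/\|f\|_{p(\cdot)}$ (the case $f=0$ being trivial), to show that $\|g\|_{p(\cdot)}=1$ implies $\|g\|_{q(\cdot)}\le2$. Letting $\gamma\downarrow1$ in the relation $\rho_{p(\cdot)}(g/\gamma)\le1$ and applying monotone convergence to both the integral and the $\mathrm{ess\,sup}$ terms gives $\rho_{p(\cdot)}(g)\le1$; writing $J=\int_{\Omega\setminus\Omega_\infty}|g|^{p}d\mu$ and $M=\mathrm{ess\,sup}_{\Omega_\infty}|g|$ we then have $J+M\le1$, and in particular $M\le1$. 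The crux is the modular comparison $\rho_{q(\cdot)}(g)\le 1+\rho_{p(\cdot)}(g)$. To prove it, note that $p\ge q$ forces $\{q=\infty\}\subseteq\Omega_\infty$, so the $\mathrm{ess\,sup}$ term of $\rho_{q(\cdot)}(g)$ is at most $M$. On $\{q<\infty\}\cap\Omega_\infty$ one has $|g|\le M\le1$, hence $|g|^{q}\le1$, while on $\{q<\infty\}\setminus\Omega_\infty$ both exponents are finite with $q\le p$, so $|g|^{q}\le1+|g|^{p}$ pointwise. Integrating and using $\mu(\Omega)=1$,
\[ \int_{\{q<\infty\}}|g|^{q}d\mu\le \mu(\{q<\infty\})+\int_{\Omega\setminus\Omega_\infty}|g|^{p}d\mu\le 1+J, \]
whence $\rho_{q(\cdot)}(g)\le(1+J)+M=1+(J+M)\le2$.

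Finally the scaling estimate with $\gamma=2$ gives $\rho_{q(\cdot)}(g/2)\le\tfrac12\rho_{q(\cdot)}(g)\le1$, so $\|g\|_{q(\cdot)}\le2$, that is $\|f\|_{q(\cdot)}\le2\|f\|_{p(\cdot)}$, and the continuity of the embedding follows. The main obstacle is exactly the modular comparison of the preceding paragraph: the constant $2$ is sharp (it is approached by $g\equiv1$ with $p\equiv\infty$ and $q=\infty$ only on a set of vanishing measure), so the estimate leaves no slack, and the two expensive contributions—the $\mathrm{ess\,sup}$ term over $\{q=\infty\}$ and the inflation of $|g|^{q}$ above $|g|^{p}$ on $\{|g|\le1\}$—must be charged against the single budget $J+M\le1$ rather than bounded independently. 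Keeping careful track of the infinite–exponent sets $\Omega_\infty$ and $\{q=\infty\}$ at every step is what makes the argument close.
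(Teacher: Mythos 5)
Your sufficiency half is correct, and it is essentially the standard Kov\'acik--Rakosnik argument (which is what the paper implicitly invokes: Lemma~\ref{lem13} is stated without proof and referred to the literature): normalize $\|f\|_{p(\cdot)}=1$, deduce $\rho_{p(\cdot)}(g)\le 1$ by monotone convergence, use the pointwise bound $|g|^{q}\le 1+|g|^{p}$ off the infinite-exponent sets together with the inclusion $\{q=\infty\}\subseteq\Omega_\infty$ to get $\rho_{q(\cdot)}(g)\le 2$, and conclude by the scaling estimate $\rho_{q(\cdot)}(g/2)\le\tfrac12\rho_{q(\cdot)}(g)\le1$. That part I would accept as written. (Your sharpness aside is wrong, though: for $g\equiv1$, $p\equiv\infty$ and $q<\infty$ a.e., one has $\rho_{q(\cdot)}(g)=\mu(\Omega)=1$, hence $\|g\|_{q(\cdot)}=1$, not close to $2$; but sharpness is not part of the statement.)

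The necessity half has a genuine gap. On $G$ you only know $p\le P:=m$ and $q\ge p+1/m$, yet your normalization ($t_n^{P}a_n=2^{-n}$, $\sum_n t_n^{P+1/m}a_n=\infty$) is calibrated to the exponent $P+1/m$, while the only available lower bound for the integrand is $(ct_n)^{q}\ge (ct_n)^{p+1/m}$, and $p$ may sit far below $P$ on $G$. (Note also that the displayed inequality $\int_{E_n}(ct_n)^{q}d\mu\ge(ct_n)^{p+1/m}a_n$ is ill-formed: $p$ is a function on $E_n$, not a number.) Concretely, take $p\equiv1$, $q\equiv\tfrac32$: the decomposition forces $m=P=2$, and then for \emph{any} admissible choice one has $\rho_{q(\cdot)}(cf)=c^{3/2}\sum_n t_n^{3/2}a_n\le c^{3/2}\sum_n t_n^{2}a_n<\infty$ (since $t_n\ge1$), so the constructed $f$ lies in $L^{q(\cdot)}$ as well; e.g.\ $t_n=2^{2n}$, $a_n=2^{-5n}$ gives $f\in L^{1}\cap L^{3/2}$, witnessing nothing --- even though of course $L^{1}\not\subset L^{3/2}$ here. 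The repair is standard and cheap: before constructing $f$, pigeonhole $G$ once more so that $p$ oscillates by less than $1/(2m)$, i.e.\ pass to a positive-measure subset on which $P'-1/(2m)\le p\le P'$ for some constant $P'$; then $q\ge p+1/m\ge P'+1/(2m)$ on that subset, and your construction run with the exponent pair $\bigl(P',\,P'+1/(2m)\bigr)$ in place of $(P,\,P+1/m)$ makes the divergence argument valid. A similar touch-up is needed in your degenerate case: an arbitrary unbounded $f$ supported in $\{p<\infty,\ q=\infty\}$ need not lie in $L^{p(\cdot)}$; you must build one (restrict to $\{p\le m\}$ and use the same block construction), after which $f\notin L^{q(\cdot)}$ is immediate from the essential supremum term.
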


Let us fix some
notation in martingale theory.

Let $(\Sigma_n)_{n\geq 0}$ be a stochastic basis, i.e., a
nondecreasing sequence of sub-$\sigma$-algebras of $\Sigma$ with
$\Sigma=\bigvee\Sigma_n,$ $f=(f_n)_{n\geq0}$ a martingale adapted to
$(\Sigma_n)_{n\geq0}$ with its difference sequence $(df_n)_{n\geq0},
$ where $df_n=f_n-f_{n-1}$ (with convention $f_{-1}\equiv0$ and $
\Sigma_{-1}=\{\Omega,\emptyset\}).$ We denote by $E_n$ the
conditional expectation with respect to $\Sigma_n.$ For a martingale
$f=(f_n)_{n\geq0},$ we define its maximal function, square function
and conditional square function as usual:
$$f^*=\sup_{n\geq0}|f_n|,~~S(f)=(\sum_{n=0}^\infty|df_n|^2)^{\frac{1}{2}},~~s(f)=(\sum_{n=0}^\infty E_{n-1}|df_n|^2)^{\frac{1}{2}}.$$
For $p\in \mathcal{P},$ the variable exponent martingale Lebesgue
space $L^{p(\cdot)}$ and the martingale Hardy spaces
$H^*_{p(\cdot)}, H^S_{p(\cdot)}$ and $H^s_{p(\cdot)}$ are defined as
follows:
$$~L^{p(\cdot)}=\{f=(f_n): \|f\|_{p(\cdot)} = \sup\|f_n\|_{p(\cdot)}<\infty\}, $$
$$H^*_{p(\cdot)}=\{f=(f_n): \|f\|_{H^*_{p(\cdot)}}~ =
~\|f^*\|_{p(\cdot)}<\infty ~\}, ~~$$
$$H^S_{p(\cdot)}=\{f=(f_n): \|f\|_{H^S_{p(\cdot)}}=\|S(f)\|_{p(\cdot)}<\infty\}, $$
$$H^s_{p(\cdot)}=\{f=(f_n): \|f\|_{H^s_{p(\cdot)}}=\|s(f)\|_{p(\cdot)}<\infty\}. $$

The structure of this paper is as follows. After some preliminaries
about variable exponent Lebesgue spaces over a probability space, in
section 2 we mainly deal with the extension of Dellacherie's theorem
and the convexity lemma to variable exponent case. In section 3 we
establish the variable exponent analogues of Burkholder-Gundy-Davis'
inequality and Chevalier's inequality. In the last section we first
prove the equivalence between two martingale spaces with predictable
control, then investigate some equivalent relations between five
variable exponent martingale spaces under regular condition.

Through this paper, we always denote by $C$ some positive constant,
it may be different in each appearance, and denote by $C_{p(\cdot)}$
a constant depending only on $p.$ Moreover, we say that two norms on
$X$ are equivalent, if the identity is continuous in double
directions, i.e., there is a constant $C>0$ such that
$$C^{-1}\|u\|_1\leq \|u\|_2 \leq C\|u\|_1, ~~ \forall u\in X.$$

\vskip .3cm

\section{Some lemmas}\label{LFM}

In this section we prove some lemmas, which will be needed in the
sequel.

\begin{lem}
\label{21}~~   Let $p\in \mathcal{P}$ with $1\leq
p(\omega)\leq\infty, $ then every martingale or nonnegative
submartingale $f=(f_n)$ satisfying $\sup\|f_n\|_{p(\cdot)}<\infty$
converges a.e. to a measurable function $f_\infty\in L^{p(\cdot)}.$
\end{lem}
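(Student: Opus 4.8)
The plan is to reduce the a.e.\ convergence to the classical Doob martingale convergence theorem by embedding $L^{p(\cdot)}$ into $L^1$, and then to establish that the a.e.\ limit lies in $L^{p(\cdot)}$ via a Fatou-type lower semicontinuity of the modular $\rho_{p(\cdot)}$.

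First I would observe that, since $p(\omega)\geq 1$ a.e., Lemma~\ref{lem13} (applied with the constant exponent $q\equiv 1\leq p$) gives the continuous embedding $L^{p(\cdot)}\subset L^1$ with $\|g\|_1\leq 2\|g\|_{p(\cdot)}$. Consequently, writing $M=\sup_n\|f_n\|_{p(\cdot)}<\infty$, we obtain $\sup_n E|f_n|=\sup_n\|f_n\|_1\leq 2M<\infty$, i.e.\ $(f_n)$ is bounded in $L^1$; in the submartingale case the same bound controls $\sup_n E f_n^+$. The classical Doob convergence theorem for $L^1$-bounded martingales (respectively, nonnegative submartingales) then yields a measurable function $f_\infty$ with $f_n\to f_\infty$ a.e., and $f_\infty\in L^1$ by the ordinary Fatou lemma.

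It remains to upgrade $f_\infty\in L^1$ to $f_\infty\in L^{p(\cdot)}$ together with the norm bound $\|f_\infty\|_{p(\cdot)}\leq M$. The key ingredient I would prove is that the modular is lower semicontinuous under a.e.\ convergence: if $g_n\to g$ a.e.\ then $\rho_{p(\cdot)}(g)\leq\liminf_n\rho_{p(\cdot)}(g_n)$. For the integral term in \eqref{1.1} this is the ordinary Fatou lemma applied to the nonnegative functions $|g_n(\omega)|^{p(\omega)}$ (note that $p(\omega)$ is fixed for each fixed $\omega$). For the ess sup term over $\Omega_\infty$, a.e.\ convergence gives $|g(\omega)|\leq\liminf_n\big(ess\sup_{\Omega_\infty}|g_n|\big)$ for a.e.\ $\omega\in\Omega_\infty$, whence $ess\sup_{\Omega_\infty}|g|\leq\liminf_n ess\sup_{\Omega_\infty}|g_n|$; combining the two estimates with the superadditivity of $\liminf$ yields the claim.

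Finally, I would pass from a modular bound to the norm bound directly, avoiding Lemma~\ref{lem11}, which is stated only for $p^+<\infty$. Since $\rho_{p(\cdot)}(cu)\leq\rho_{p(\cdot)}(u)$ for $0<c\leq 1$ (because $c^{p(\omega)}\leq 1$ on $\Omega\setminus\Omega_\infty$ and the ess sup term scales by $c$), the definition \eqref{1.2} shows that, for every $\gamma>M$ and every $n$, the inequality $\|f_n\|_{p(\cdot)}<\gamma$ forces $\rho_{p(\cdot)}(f_n/\gamma)\leq 1$. Applying the lower semicontinuity above to $g_n=f_n/\gamma$ gives $\rho_{p(\cdot)}(f_\infty/\gamma)\leq 1$, hence $\|f_\infty\|_{p(\cdot)}\leq\gamma$; letting $\gamma\downarrow M$ yields $\|f_\infty\|_{p(\cdot)}\leq M$ and in particular $f_\infty\in L^{p(\cdot)}$. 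The main obstacle is precisely this last passage in the regime $\mu(\Omega_\infty)>0$ (so $p^+=\infty$), where Lemma~\ref{lem11} is unavailable and the ess sup term in the modular must be handled by hand.
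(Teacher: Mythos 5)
Your proof takes essentially the same route as the paper: the embedding $L^{p(\cdot)}\subset L^1$ from Lemma~\ref{lem13} with $q\equiv 1$, Doob's convergence theorem, and then a Fatou argument to place $f_\infty$ in $L^{p(\cdot)}$. The paper disposes of the last step with the single phrase ``by Fatou lemma, $f_\infty\in L^{p(\cdot)}$''; your verification of the modular's lower semicontinuity under a.e.\ convergence (including the essential-supremum term on $\Omega_\infty$, where $p^+=\infty$ makes Lemma~\ref{lem11} unavailable) is a correct and welcome filling-in of exactly that detail.
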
\vskip .1cm

 \begin{proof}~~ Since $p(\omega)\geq1,$ from Lemma 1.3 we have
$$\sup_{n\geq0} \|f_n\|_1 \leq 2 \sup_{n\geq0}
\|f_n\|_{p(\cdot)}<\infty. $$ By Doob's martingale convergence
theorem, $f_n\rightarrow f_\infty~ a.e.,$ by Fatou lemma,
$f_\infty\in L^{p(\cdot)}.$ \end{proof}

In classical theory, Dellacherie exploited a special approach to
prove convex $\Phi-$function inequalities for martingales. It was
first formulated in \cite{d2}, also see \cite{l3}. The following
lemma generalizes Dellacherie theorem to variable exponent case.

\begin{lem}
\label{23}~~  Let $ p\in \mathcal{P}$ with $1\leq p^-\leq
p^+<\infty,~v$ be a non-negative r.v., $(u_n)_{n\geq0}$ a
nonnegative, nondecreasing adapted sequence satisfying
\begin{align}\label{2.1}E(u_\infty-u_{n-1}|\Sigma_n)\leq E(v|\Sigma_n),~~\forall n\geq0,\end{align}
or a nonnegative, nondecreasing predictable sequence with $u_0=0$
and
\begin{align}\label{2.2}E(u_\infty-u_n|\Sigma_n)\leq E(v|\Sigma_n), ~~\forall n\geq0.\end{align}
 then\vskip .5cm

\begin{enumerate}
\item[(1)]~ $E u_\infty^p \leq  p^+ E v u_\infty^{p-1},$

\item[(2)]~ $\rho_{p(\cdot)}(u_\infty)\leq C\rho_{p(\cdot)}(v), $

\item[(3)]~ $\|u_\infty\|_{p(\cdot)}\leq C \|v\|_{p(\cdot)},$ if $v\in
L^{p(\cdot)},$\end{enumerate}
where $C$ is a positive constant depending only on $p.$
\end{lem}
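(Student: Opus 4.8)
The plan is to prove the three assertions in sequence, with part (1) serving as the analytic heart of the argument and parts (2) and (3) following by integrating over the exponent and applying the basic modular/norm facts from Lemma \ref{lem11}. For part (1), the natural route is Dellacherie's classical device of writing a power as a telescoping sum that exposes the adapted/predictable structure. The key algebraic identity is
\begin{align}\label{plan:telescope}
u_\infty^p = \sum_{n\geq 0}\bigl(u_n^p - u_{n-1}^p\bigr),
\end{align}
valid since $u_n$ is nondecreasing with $u_{-1}=0$. I would then bound each increment $u_n^p-u_{n-1}^p$ using the elementary convexity inequality $b^p-a^p\le p\,b^{p-1}(b-a)$ for $0\le a\le b$ (here $p=p(\omega)\le p^+$), which turns the sum into something controlled by $\sum_n u_n^{p-1}(u_n-u_{n-1})$. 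The crucial maneuver is to replace $u_n^{p-1}$, an $\Sigma_n$-measurable (or predictable) factor, so that taking expectations lets me pull it outside a conditional expectation and then invoke the hypothesis \eqref{2.1} or \eqref{2.2}. Concretely, after using measurability I expect to arrive at $E\,u_\infty^p \le p^+\, E\sum_n u_\infty^{p-1}E(\,\cdot\mid \Sigma_n)$-type terms, where the telescoped tail $u_\infty-u_{n-1}$ (resp. $u_\infty-u_n$) appears and is dominated by $E(v\mid\Sigma_n)$.

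The main obstacle I anticipate is handling the variable exponent $p=p(\omega)$ rigorously inside these manipulations. In the classical constant-exponent proof one freely uses $p$ as a scalar; here $p(\omega)^{-1}$ and $u_n^{p(\omega)-1}$ are random, so I must be careful that the convexity estimate is applied pointwise in $\omega$ and that the measurability needed to extract factors from conditional expectations is not disturbed by the $\omega$-dependence of the exponent. Since $u_n$ is $\Sigma_n$-measurable (adapted case) or $\Sigma_{n-1}$-measurable (predictable case), and $p$ need not be measurable with respect to any $\Sigma_n$, I would keep the factor $u_\infty^{p-1}$ (rather than $u_n^{p-1}$) as the common multiplier after summation, so that the exponent-carrying term is a single fixed random variable that can be factored out of the expectation as a whole, not inside each conditional expectation. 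This is the delicate point where the proof diverges from the classical one, and I expect the hypotheses \eqref{2.1}/\eqref{2.2} to be exactly what is needed to close the telescoping sum into $E\sum_n u_\infty^{p-1}(u_n-u_{n-1})\cdot(\text{control by }v)$, yielding $E\,u_\infty^p\le p^+\,E\,v\,u_\infty^{p-1}$.

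For part (2), I would start from the pointwise modular and apply (1) together with the pointwise Young-type inequality
\begin{align}\label{plan:young}
v\,u_\infty^{p-1}\le \frac{v^{p}}{p} + \frac{u_\infty^{p}}{p'} = \frac{v^p}{p}+\Bigl(1-\tfrac1p\Bigr)u_\infty^{p},
\end{align}
so that the cross term $E\,v\,u_\infty^{p-1}$ is split into a multiple of $\rho_{p(\cdot)}(v)$ plus a multiple of $\rho_{p(\cdot)}(u_\infty)$. Feeding this into $E\,u_\infty^p\le p^+\,E\,v\,u_\infty^{p-1}$ gives an inequality of the form $\rho_{p(\cdot)}(u_\infty)\le A\,\rho_{p(\cdot)}(v)+B\,\rho_{p(\cdot)}(u_\infty)$ with constants $A,B$ depending only on $p^-,p^+$; here the use of $p^+<\infty$ and $p^-\ge 1$ guarantees $B<1$ after a careful choice, so the $\rho_{p(\cdot)}(u_\infty)$ term can be absorbed to the left, leaving $\rho_{p(\cdot)}(u_\infty)\le C\,\rho_{p(\cdot)}(v)$. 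A technical caveat is that one should first verify $\rho_{p(\cdot)}(u_\infty)<\infty$ before absorbing it (e.g.\ by a truncation/stopping argument replacing $u_\infty$ with $u_n\wedge N$ and letting $N,n\to\infty$ via monotone convergence), so that the subtraction is legitimate.

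Finally, part (3) is a routine passage from the modular estimate to the norm estimate using Lemma \ref{lem11}. Assuming $v\in L^{p(\cdot)}$ with $\|v\|_{p(\cdot)}>0$, I would normalize by considering $\tilde v = v/(c\|v\|_{p(\cdot)})$ for a suitable constant $c$ so that $\rho_{p(\cdot)}(\tilde v)\le 1$; by homogeneity of the hypotheses in $u$ and $v$ and by part (2), this yields $\rho_{p(\cdot)}(u_\infty/(c\|v\|_{p(\cdot)}))\le C$, and then parts (1)--(3) of Lemma \ref{lem11} convert the modular bound into $\|u_\infty\|_{p(\cdot)}\le C'\|v\|_{p(\cdot)}$. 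I expect parts (2) and (3) to be essentially mechanical once (1) is established; the entire difficulty of the lemma is concentrated in correctly executing the Dellacherie telescoping argument with a genuinely random exponent, which is the step I would allocate the most care to.
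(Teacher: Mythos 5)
The weak point is exactly where you yourself locate the difficulty, and your proposed fix does not resolve it. In the classical Dellacherie/Garsia--Neveu argument, after the convexity bound $u_n^p-u_{n-1}^p\le p\,u_n^{p-1}(u_n-u_{n-1})$ one performs Abel summation to rewrite $\sum_n u_n^{p-1}(u_n-u_{n-1})$ as $\sum_n (u_n^{p-1}-u_{n-1}^{p-1})(u_\infty-u_{n-1})$, and the hypothesis (2.1) enters \emph{only} because the weight $u_n^{p-1}-u_{n-1}^{p-1}$ is $\Sigma_n$-measurable: one conditions on $\Sigma_n$, replaces $E(u_\infty-u_{n-1}\mid\Sigma_n)$ by $E(v\mid\Sigma_n)$, and undoes the conditioning. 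When $p=p(\omega)$ is not $\Sigma_n$-measurable, that weight is no longer $\Sigma_n$-measurable and this step is forbidden --- as you correctly note. But your substitute, keeping $u_\infty^{p-1}$ as ``a single fixed random variable that can be factored out of the expectation as a whole,'' is not a legitimate operation: expectations do not factor over products, and $u_\infty^{p-1}$ can no more be pulled out of $E(\,\cdot\mid\Sigma_n)$ than $u_n^{p-1}$ can. Concretely, once you write $Eu_\infty^p=\sum_n E\bigl[u_\infty^{p-1}(u_n-u_{n-1})\bigr]$, there is no $\Sigma_n$-measurable factor left to pair with the tails $u_\infty-u_{n-1}$, so the hypotheses (2.1)/(2.2) can never be invoked; and if instead you bound $u_n^{p-1}\le u_\infty^{p-1}$ inside the telescoped sum before conditioning, the sum collapses to the trivial $Eu_\infty^p\le p^+Eu_\infty^p$. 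The argument cannot be closed along the lines you describe.

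What the paper actually does for (1) is a two-stage approximation that is entirely absent from your plan. First, when $p$ is a $\Sigma_n$-measurable \emph{simple} function, decompose $\Omega$ into atoms $A_{i,j}\in\Sigma_n$ on which $p$ is constant; on each atom, shift the filtration and the data by $n$ (set $u'_m=u_{m+n}\chi_{A_{i,j}}$, $\Sigma'_m=\Sigma_{m+n}$, $v'=v\chi_{A_{i,j}}$), check that (2.1)/(2.2) survive because $\chi_{A_{i,j}}$ is measurable at time $0$ of the new filtration, and apply the \emph{classical constant-exponent} Dellacherie theorem on each atom; summing over atoms and letting simple exponents increase to $p$ (monotone/dominated convergence, splitting on $\{u_\infty<1\}$ and $\{u_\infty\ge1\}$) gives (1) for $\Sigma_n$-measurable $p$. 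Second, for arbitrary $p\in\mathcal{P}$ one uses $\Sigma=\sigma\bigl(\bigcup_n\Sigma_n\bigr)$ to construct simple functions $p_k\uparrow p$ with each $p_k$ being $\Sigma_{n_k}$-measurable, applies the first stage to each $p_k$, and passes to the limit. This reduction to constant exponents on atoms of an adapted partition is the essential idea of the lemma, and nothing in your telescoping scheme replaces it. Your parts (2) and (3) are fine in outline and close to the paper's (the paper also gets (2) from (1) by Young plus absorption, but puts the constant inside the Young inequality, $p^+va^{p-1}\le (p^+v)^p/p+a^p/q$, so the absorbed coefficient is genuinely below $1$; and it gets (3) from (1) by H\"older duality with $\|u_\infty^{p-1}\|_{p'(\cdot)}=1$ rather than from (2) by scaling), but both rest on (1), so the gap is fatal to the proposal as written.
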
\vskip.1cm

\begin{proof}~~ $1^\circ$~ First we assume that $p$ is
$\Sigma_n-$measurable for some $n,$ then there is a simple function
sequence $\{s_i\}$  such that all $s_i$ are $\Sigma_n-$measurable,
$s_i\geq1$ and $s_i\uparrow p.$

Let $s_i=\sum_{j=1}^{J_i} a_{i,j}\chi_{A_{i,j}},$ where $A_{i,j}\in
\Sigma_n$ with $A_{i,j}\cap A_{i,j'}=\emptyset, j\neq j',
\bigcup_{j=1}^{J_i}A_{ij}=\Omega.$ For each $A_{i,j},$ take
$\Phi(t)=t^{a_{i,j}}$, replace $(u_n)_{n\geq1}$ and $v$ by
$(u'_m)_{m\geq 0}$ and $v',$ respectively. Here,
$u'_m=u_{m+n}\chi_{A_{i,j}}, \Sigma'_m=\Sigma_{m+n}$, $v'= v
\chi_{A_{i,j}}.$  In view of the $\Sigma_n$-measurability of
$A_{i,j},$ inequality (2.1) becomes
\begin{align}\label{2.3}
\begin{aligned}
E(u'_\infty-u'_{m-1}|\Sigma'_m)&=E(u_\infty\chi_{A_{i,j}}-u_{m+n-1}\chi_{A_{i,j}}|\Sigma_{m+n})\\
&=E(u_\infty-u_{m+n-1}|\Sigma_{m+n})\chi_{A_{i,j}}\\
&\leq E(v|\Sigma_{m+n})\chi_{A_{i,j}}\\
&= E(v\chi_{A_{i,j}}|\Sigma_{m+n})=E(v'|\Sigma'_m),~~\forall~m\geq0.
\end{aligned}\end{align}
Similarly, (2.2) becomes
\begin{align}\label{2.4}E(u'_\infty-u'_m|\Sigma'_m)\leq E(v'|\Sigma'_m),~~\forall~m\geq0.\end{align}
By classical Dellacherie theorem, we get
$$Eu_\infty^{a_{i,j}}\chi_{A_{i,j}}\leq
E a_{i,j} v u_\infty^{a_{i,j}-1}\chi_{A_{i,j}},$$ and
\begin{align}
\label{2.5} Eu_\infty^{s_i}=\sum_{j=1}^{J_i}E
u_\infty^{a_{i,j}}\chi_{A_{i,j}} \leq \sum_{j=1}^{J_i} a_{i,j}Ev
u_\infty^{a_{i,j}-1}\chi_{A_{i,j}}\leq p^+ E vu_\infty^{s_i-1}.
\end{align}
 If $Ev u_\infty^{p-1}<\infty,$ it is
clear that $$ v u_\infty^{s_i-1}=
v|u_\infty\chi_{\{u_\infty<1\}}|^{s_i-1}+
v|u_\infty\chi_{\{u_\infty\geq 1\}}|^{s_i-1},$$ Lebesgue dominated
convergence theorem and Levi monotonic convergence theorem give
$Evu_\infty^{s_i-1}\rightarrow Ev u_\infty^{p-1},$ as $
i\rightarrow\infty.$ Similarly, from
$$|u_\infty|^{s_i}=|u_\infty\chi_{\{u_\infty<1\}}|^{s_i}
+|u_\infty\chi_{\{u_\infty\geq1\}}|^{s_i}, $$ we get
$Eu_\infty^{s_i}\rightarrow Eu_\infty^{p},$ as $i\rightarrow\infty.$
By taking limit on both sides of (2.5), we obtain $Eu_\infty^p \leq
p^+Ev u_\infty^{p-1},$ this is (1).

Now suppose that $p\in\mathcal{P}$ only,  we claim that there is a
sequence $\{p_k\}$ of simple functions such that $p_k$ is
$\Sigma_{n_k}$-measurable for some $n_k$ and $p_k\uparrow p,
n_k\uparrow\infty$ as $k\uparrow\infty.$ Indeed, we first take a
simple function sequence $\{g_k\},$ which is $\Sigma$-measurable
such that $g_k\uparrow p.$ Due to
$\Sigma=\sigma(\cup_{n=1}^\infty\Sigma_n),$ for every $A\in\Sigma,$
there is a sequence $A_k\in \cup_{n=1}^\infty\Sigma_n$ such that
$\mu(A\bigtriangleup A_k)\rightarrow0.$ Since $(\Sigma_n)$ is
increasing, for every $g_k,$ there is a simple function $g'_k$ such
that $g'_k$ is $\Sigma_{n_k}$-measurable, $g'_k\leq p$ and
$\mu\{g_k\neq g'_k\}<2^{-k}, $ so we have $g'_k\rightarrow p,$ a.e.
Let $p_k=g'_1\vee \cdots \vee g'_k,$ then $p_k\uparrow p,$ a.e. From
previous proof, (1) holds for every $p_k.$   We then obtain  (1) for
the general case by taking limit.

$2^\circ$~ Notice that for $p, q\in \mathcal{P}$ with
$\frac{1}{p(\omega)}+\frac{1}{q(\omega)}=1,$ by Young inequality we
have
$$p^+ba^{p-1}\leq \frac{(p^+b)^p}{p}+\frac{a^p}{q}, ~~ a^p=\frac{a^p}{p}+\frac{a^p}{q}, ~~ \forall a, b >0.$$
Letting $a=u_\infty(\omega), b=v(\omega)$ in these inequalities and
taking integrals on both sides, then (2) immediately follows form
(1).

$3^\circ$~ To prove (3), we may assume $\|u_\infty\|_{p(\cdot)}=1.$
Since
$\rho_{p'(\cdot)}(u_\infty^{p-1})=\rho_{p(\cdot)}(u_\infty)=1,$
Lemma 1.1(5) and the proof above of (1)  show that
$$1= \rho_{p(\cdot)}(u_\infty)=E u_\infty^p
\leq p^+Evu_\infty^{p-1}\leq C
\|v\|_{p(\cdot)}\|u_\infty^{p-1}\|_{p'(\cdot)}= C
\|v\|_{p(\cdot)},$$ so $\|u_\infty\|_{p(\cdot)}\leq C
\|v\|_{p(\cdot)}.$ The proof is complete. \end{proof}

The following lemma is so called convexity lemma whose classical
version belongs to Burkholder, Davis and Gundy, see \cite{b2}.

\begin{lem}
\label{24}~~   Let $p\in \mathcal{P}$ with $p^+<\infty,$
$(\xi_n)_{n\geq0}$ be a non-negative r.v. sequence, then there is a
constant $C=C_{p(\cdot)}>0$ such that
\begin{align}\label{2.6}\|\sum_{n=1}^\infty E_n\xi_n\|_{p(\cdot)}\leq C \|\sum_{n=1}^\infty
\xi_n\|_{p(\cdot)}.\end{align}
\end{lem}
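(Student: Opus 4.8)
The plan is to reduce this statement, which is Lemma~\ref{24} (the convexity lemma), to the Dellacherie-type Lemma~\ref{23} that was just proved. The key observation is that the sequence $u_n=\sum_{k=1}^n E_k\xi_k$ is nonnegative and nondecreasing, but it is \emph{not} adapted in the naive sense one wants; the natural target is to fit it into one of the two hypotheses \eqref{2.1} or \eqref{2.2} of Lemma~\ref{23} with a suitable dominating r.v.\ $v$. So first I would set $u_\infty=\sum_{n=1}^\infty E_n\xi_n$ and $v=\sum_{n=1}^\infty \xi_n$, and then verify the key conditional-expectation domination inequality that lets me invoke Lemma~\ref{23}(3), which immediately gives $\|u_\infty\|_{p(\cdot)}\leq C\|v\|_{p(\cdot)}$, that is, exactly \eqref{2.6}.

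The crux is therefore to check a comparison of the form $E(u_\infty-u_{n-1}\mid\Sigma_n)\leq E(v\mid\Sigma_n)$ (or its predictable analogue \eqref{2.2}). I would compute, for each fixed $n$,
\begin{align*}
E(u_\infty - u_{n-1}\mid\Sigma_n)
&= E\Bigl(\sum_{k=n}^\infty E_k\xi_k\,\Big|\,\Sigma_n\Bigr)
= \sum_{k=n}^\infty E\bigl(E_k\xi_k\mid\Sigma_n\bigr).
\end{align*}
Using the tower property, for $k\geq n$ one has $E(E_k\xi_k\mid\Sigma_n)=E(\xi_k\mid\Sigma_n)$ because $\Sigma_n\subset\Sigma_k$. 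Hence the right side equals $\sum_{k=n}^\infty E(\xi_k\mid\Sigma_n)=E\bigl(\sum_{k=n}^\infty\xi_k\mid\Sigma_n\bigr)\leq E(v\mid\Sigma_n)$ since $\sum_{k=n}^\infty\xi_k\leq\sum_{k=1}^\infty\xi_k=v$ by nonnegativity of the $\xi_k$. This is precisely hypothesis \eqref{2.1} with $u_n=\sum_{k=1}^n E_k\xi_k$, which is adapted (each $E_k\xi_k$ is $\Sigma_k$-measurable, hence $u_n$ is $\Sigma_n$-measurable), nonnegative and nondecreasing.

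With the hypothesis of Lemma~\ref{23} verified, conclusion~(3) of that lemma applies directly: since $p^+<\infty$ and $v\in L^{p(\cdot)}$ whenever the right-hand side of \eqref{2.6} is finite (if it is infinite there is nothing to prove), we obtain $\|u_\infty\|_{p(\cdot)}\leq C\|v\|_{p(\cdot)}$ with $C=C_{p(\cdot)}$, which is the desired inequality.

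The step I expect to be the main obstacle is not the algebra — the tower-property computation above is routine — but rather being careful that the indexing and adaptedness conventions match those of Lemma~\ref{23}. In particular I must confirm that $u_0$ (here $u_0=E_0\xi_0$, or $0$ depending on the convention for the starting index) is handled consistently, and that the shift between the ``adapted with $u_{n-1}$'' form \eqref{2.1} and the ``predictable with $u_n$'' form \eqref{2.2} is the right one for the summation range. A subtlety worth double-checking is whether the sum starts at $n=0$ or $n=1$ and whether the conditioning index in $E_n\xi_n$ is aligned so that each summand is genuinely $\Sigma_n$-measurable; once these bookkeeping points are pinned down, the reduction to Lemma~\ref{23} is immediate and the constant $C$ inherits its dependence on $p$ from there.
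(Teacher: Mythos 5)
Your proposal is correct and follows essentially the same route as the paper: the paper's proof also sets $u_n=\sum_{k=1}^n E_k\xi_k$, $v_n=\sum_{k=1}^n\xi_k$, observes via the tower property that $E_n(u_\infty-u_{n-1})=E_n(v_\infty-v_{n-1})\leq E_n v_\infty$, and then invokes Lemma~\ref{23}(3). Your version merely spells out the tower-property computation and the adaptedness check that the paper leaves implicit.
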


\begin{proof} ~~
Let $v_n=\sum_{k=1}^n \xi_k,~u_n=\sum_{k=1}^n E_k\xi_k,$ for any $n$
we have
$$E_n(u_\infty-u_{n-1})=E_n(v_\infty-v_{n-1})\leq E_nv_\infty,$$
so (2.6) follows from Lemma 2.2(3). \end{proof}

\begin{lem}
\label{25}~~   Let $p\in \mathcal{P}$ with $2\leq p\leq
p^+<\infty,$ then there is a $C=C_{p(\cdot)}$ such that for every
martingale $f=(f_n),$
\begin{align}\label{2.7}\|s(f)
\|_{p(\cdot)}\leq
C\|S(f)\|_{p(\cdot)}.\end{align}
\end{lem}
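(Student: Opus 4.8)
The plan is to remove the squares from both sides via Lemma 1.2 and thereby reduce the estimate to the convexity lemma (Lemma 2.4) at the halved exponent. Writing $s(f)=(s(f)^2)^{1/2}$ and $S(f)=(S(f)^2)^{1/2}$, I would apply Lemma 1.2 with $s=\frac12$. The hypothesis $2\leq p$ forces $p^-\geq 2$, so $\frac12 p^-\geq 1$ and Lemma 1.2 applies, giving
$$\|s(f)\|_{p(\cdot)}=\|s(f)^2\|_{\frac{p}{2}(\cdot)}^{1/2},\qquad \|S(f)\|_{p(\cdot)}=\|S(f)^2\|_{\frac{p}{2}(\cdot)}^{1/2}.$$
Hence it suffices to establish $\|s(f)^2\|_{\frac{p}{2}(\cdot)}\leq C\,\|S(f)^2\|_{\frac{p}{2}(\cdot)}$ with $C=C_{p(\cdot)}$; note that $q:=p/2$ satisfies $q^+=p^+/2<\infty$, the standing hypothesis of Lemma 2.4.

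With $\xi_n:=|df_n|^2$ one has $S(f)^2=\sum_n\xi_n$ and $s(f)^2=\sum_n E_{n-1}\xi_n$. The conditional square function carries the conditional expectation $E_{n-1}$ rather than $E_n$, so to bring it into the form of Lemma 2.4 I would apply the convexity lemma to the shifted stochastic basis $\mathcal{G}_n=\Sigma_{n-1}$, which is again nondecreasing with $\bigvee_n\mathcal{G}_n=\Sigma$ and whose conditional expectations are precisely $E_{n-1}$. This yields
$$\Big\|\sum_{n\geq1}E_{n-1}|df_n|^2\Big\|_{\frac{p}{2}(\cdot)}\leq C\,\Big\|\sum_{n\geq1}|df_n|^2\Big\|_{\frac{p}{2}(\cdot)}\leq C\,\|S(f)^2\|_{\frac{p}{2}(\cdot)}.$$
Combining this with the first display and absorbing the square root into the constant completes the argument.

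The substance of the proof is entirely carried by the interplay of Lemma 1.2 and the convexity lemma; the point where the restriction $p\geq 2$ is genuinely used --- and cannot be avoided by this method --- is the application of Lemma 1.2 with $s=\frac12$, which demands $p^-\geq 2$. The only remaining bookkeeping is the boundary term at $n=0$, namely $E_{-1}|df_0|^2=E|f_0|^2$ versus $|f_0|^2$: since $E|f_0|^2$ is a constant, H\"older's inequality (Lemma 1.1(5)) together with the finiteness of $\|\chi_\Omega\|_{\frac{p}{2}(\cdot)}$ and $\|\chi_\Omega\|_{(\frac{p}{2})'(\cdot)}$ on the probability space bounds $\|E|f_0|^2\|_{\frac{p}{2}(\cdot)}$ by $C_{p(\cdot)}\||f_0|^2\|_{\frac{p}{2}(\cdot)}\leq C_{p(\cdot)}\|S(f)^2\|_{\frac{p}{2}(\cdot)}$. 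I expect this index and boundary bookkeeping to be the only fiddly part, the real difficulty having already been dispatched inside Lemma 2.4 via Dellacherie's theorem.
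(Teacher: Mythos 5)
Your proposal is correct and follows essentially the same route as the paper's own proof: both arguments use Lemma 1.2 (where $p^-\geq 2$ is needed) to pass to the halved exponent $\frac{p(\cdot)}{2}$ and then invoke the convexity lemma (Lemma 2.3 in the paper's numbering, not 2.4) with $\xi_n=|df_n|^2$. The shift from $E_n$ to $E_{n-1}$ and the $n=0$ boundary term, which you treat explicitly, are simply glossed over in the paper's one-line proof, so your version is the same argument written out more carefully.
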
\vskip .1cm

\begin{proof} ~~ Let $v_n=S_n(f)^2=\sum_{k=1}^n
|df_k|^2,~u_n=s_n(f)^2=\sum_{k=1}^n E_{k-1}|df_k|^2,$ then by Lemmas
1.2 and 2.3 we obtain
$$\|s(f)\|_{p(\cdot)}=\|s(f)^2\|^{\frac{1}{2}}_{\frac{p(\cdot)}{2}}\leq C\|S(f)^2\|^{\frac{1}{2}}_{\frac{p(\cdot)}{2}}=C\|S(f)\|_{p(\cdot)}.$$
This is desired. \end{proof}


\section{Burkholder-Gundy-Davis inequality}

Let us first extend classical Burkholder-Gundy-Davis inequality to
variable exponent case, which is one of the most fundamental
theorems in martingale theory, see \cite{b2}.

\begin{thm}\label{31}~~   Let $p\in \mathcal{P}$ with $1\leq p^-\leq
p^+<\infty,$ then there is a $C=C_{p(\cdot)}$ such that for every
martingale $f=(f_n),$
\begin{align}\label{3.1} C^{-1}\|f^*\|_{p(\cdot)} \leq \| S(f) \|_{p(\cdot)}\leq
C\|f^*\|_{p(\cdot)}.\end{align}
\end{thm}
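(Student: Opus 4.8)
The goal is the two-sided Burkholder–Gundy–Davis inequality in the variable-exponent norm, so my strategy is to reduce it to the convexity machinery already developed in Section 2, in particular the Dellacherie-type estimate of Lemma~\ref{23} and the convexity Lemma~\ref{24}, together with the change-of-index identity in Lemma~\ref{lem12}. The classical proof uses Davis' decomposition plus good-$\lambda$ inequalities, but the introduction warns that good-$\lambda$ is unavailable here; so I would instead follow the route that passes through the conditional square function $s(f)$ and the Doob-type control already packaged in the convexity lemma, thereby avoiding any distributional (level-set) argument.

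\textbf{First half: $\|f^*\|_{p(\cdot)}\le C\|S(f)\|_{p(\cdot)}$.}
First I would handle the case of a large exponent, say $p^-\ge 2$. There one can apply Lemma~\ref{lem12} to pass to the exponent $p(\cdot)/2\ge 1$: setting $\xi_n=|df_n|^2$, the quantity $\sum_n\xi_n=S(f)^2$ and the Doob maximal estimate for $f^*$ should be dominated, after squaring, by $\sum_n E_{n}\xi_n$ up to the martingale transform controlling $f^*{}^2$. More precisely I expect to write $(f^*)^2\le C\,u_\infty$ where $u_n$ is a nondecreasing adapted (or predictable) sequence satisfying a Dellacherie hypothesis \eqref{2.1} or \eqref{2.2} with $v=S(f)^2$, and then invoke Lemma~\ref{23}(3) in the exponent $p(\cdot)/2$ to get $\|(f^*)^2\|_{p(\cdot)/2}\le C\|S(f)^2\|_{p(\cdot)/2}$, which by Lemma~\ref{lem12} is exactly $\|f^*\|_{p(\cdot)}^2\le C\|S(f)\|_{p(\cdot)}^2$. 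For the full range $1\le p^-\le p^+<\infty$ I would combine this with the reverse inequality below and a duality/interpolation step, or alternatively use Lemma~\ref{25} to move between $s(f)$ and $S(f)$ so that the convexity lemma applies on the correct side.

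\textbf{Second half: $\|S(f)\|_{p(\cdot)}\le C\|f^*\|_{p(\cdot)}$.}
Here the natural device is again Lemma~\ref{23}: I would set $u_n=S_n(f)^2$ and try to verify a predictable control of the form $E(u_\infty-u_n\mid\Sigma_n)\le E(v\mid\Sigma_n)$ with $v$ comparable to $(f^*)^2$, using the orthogonality $E(|df_k|^2\mid\Sigma_{k-1})$ and the identity $E_n(u_\infty-u_n)=E_n\sum_{k>n}|df_k|^2=E_n(f_\infty-f_n)^2\le 4E_n(f^*)^2$. Then Lemma~\ref{23}(3) in the exponent $p(\cdot)/2$ gives $\|S(f)^2\|_{p(\cdot)/2}\le C\|(f^*)^2\|_{p(\cdot)/2}$, and Lemma~\ref{lem12} finishes it.

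\textbf{The main obstacle.}
The delicate point is the small-exponent regime $1\le p^-<2$, where $p(\cdot)/2$ can drop below $1$ and Lemma~\ref{lem12} (which needs $sp^-\ge 1$) and Lemma~\ref{23} (stated for $p^-\ge 1$) no longer apply directly to the squared quantities. In the classical theory this is exactly where Davis' decomposition and good-$\lambda$ enter, and since those tools are declared unusable, I expect the real work to be a substitute argument for small $p$: either a direct Dellacherie estimate applied to $f^*$ and $S(f)$ themselves (not their squares), so that the exponent stays $\ge 1$, or a stopping-time/atomic decomposition adapted to the variable exponent that reduces the small-$p$ case to the already-proved large-$p$ case. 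Making that reduction rigorous, while keeping every constant depending only on $p$ through the $C_{p(\cdot)}$ bookkeeping, is where I anticipate the bulk of the difficulty.
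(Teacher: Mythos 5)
Your proposal correctly identifies the central tool (the variable-exponent Dellacherie lemma, Lemma~\ref{23}) and even names, among several guesses, the route the paper actually takes; but it never executes that route, and the parts you do carry out cover at best the range $p^-\ge 2$. The paper's proof is a single uniform argument valid for all $1\le p^-\le p^+<\infty$ and it never squares anything: for each fixed $n$ consider the shifted martingale $g_m=f_{n+m}-f_{n-1}$ adapted to $\Sigma'_m=\Sigma_{n+m}$, and apply the \emph{conditional} form of the classical Davis/Burkholder--Gundy--Davis inequality, $E(g^*|\Sigma_n)\le CE(S(g)|\Sigma_n)$ and $E(S(g)|\Sigma_n)\le CE(g^*|\Sigma_n)$, with an absolute constant. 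Since $f^*-f^*_{n-1}\le g^*$, $S(g)=(S(f)^2-S_{n-1}(f)^2)^{1/2}\le S(f)$, and symmetrically $S(f)-S_{n-1}(f)\le S(g)$, $g^*\le 2f^*$, this yields exactly hypothesis \eqref{2.1} for the pair $u_n=f^*_n$, $v=CS(f)$ and for the pair $u_n=S_n(f)$, $v=2Cf^*$; Lemma~\ref{23}(3) then gives both halves of \eqref{3.1} at once, with no case split on $p^-$ and no passage through $p(\cdot)/2$. This conditional-Davis step is the key idea, and it is precisely what is missing from your write-up: your ``direct Dellacherie estimate applied to $f^*$ and $S(f)$ themselves'' is the right idea, but you leave it as a conjecture rather than supplying the inequality that verifies \eqref{2.1}.

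Beyond that omission, two of the steps you do sketch are shaky. First, your first-half argument for $p^-\ge 2$ leans on a ``Doob maximal estimate''; but boundedness of Doob's maximal operator on $L^{p(\cdot)}$ over a probability space is exactly what fails for general variable exponents (the Aoyama / Nakai--Sadasue issue cited in the introduction), so no route through it is available. Indeed, if you instead try to verify \eqref{2.1} for $u_n=(f^*_n)^2$, $v=CS(f)^2$, you get $E_n\bigl[(f^*)^2-(f^*_{n-1})^2\bigr]\le 2f^*_{n-1}E_n(g^*)+E_n\bigl[(g^*)^2\bigr]$, and the cross term $2f^*_{n-1}E_n(g^*)$ is not dominated by $E_n(v)$ for any fixed $v$ built from $S(f)$ alone, so the squared maximal function does not satisfy the Dellacherie hypothesis directly. (Your second-half computation for $p^-\ge2$, with $u_n=S_n(f)^2$ and $v=C(f^*)^2$, is essentially correct, though since this $u_n$ is adapted rather than predictable you must control $E_n(u_\infty-u_{n-1})$, which costs only the extra term $|df_n|^2\le E_n\bigl(4(f^*)^2\bigr)$.) Second, the fallbacks you offer for $1\le p^-<2$ --- duality or interpolation between variable exponent spaces, or an atomic decomposition --- are developed nowhere in the paper and are far from routine in this setting, so they cannot be regarded as closing the gap.
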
~~

\begin{proof} ~~ Here we
 use Davis' method. For a martingale $f=(f_n),$ we define $g=(g_m),
g_m=f_{m+n}-f_{n-1}, ~ \Sigma'_m=\Sigma_{m+n}, m\geq 0$. Due to the
classical Burkholder-Gundy-Davis inequality (in conditioned
version), we have
\begin{align}\label{3.2}
\begin{aligned}E(f^*-f^*_{n-1}|\Sigma_n)&\leq E(\sup_{m\geq0}|f_{n+m}-f_{n-1}||\Sigma_n)\\
&=E(g^*|\Sigma'_0)\leq CE(S(g)|\Sigma'_0)\\
&=C E((S(f)^2-S_{n-1}(f)^2)^{\frac{1}{2}}|\Sigma_n)\\
&\leq C E(S(f)|\Sigma_n),
\end{aligned}
\end{align}
and
\begin{align}\label{3.3}
\begin{aligned}E(S(f)-S_{n-1}(f)|\Sigma_n)&\leq E((S(f)^2-S_{n-1}(f)^2)^{\frac{1}{2}}|\Sigma_n)\\
&\leq E(S(g)|\Sigma'_0) \leq CE(g^*|\Sigma'_0) \leq
CE(f^*|\Sigma_n).
\end{aligned}
\end{align}
Using Lemma 2.2, the inequality (3.1) follows from (3.2) and (3.3).
\end{proof}

Now we prove a more shaper inequality: Chevalier's inequality. For a
martingale $f=(f_n),$ consider the functions $M(f)$ and $m(f):$
$$M(f)=\sup_{n\geq0} M_n(f),~~ m(f)=\sup_{n\geq0} m_n(f),$$
where $$M_n(f)=f^*_n\vee S_n(f), ~~ m_n(f)=f^*_n\wedge S_n(f).$$

\begin{thm}\label{32}~~   Let $p\in \mathcal{P}$ with $1\leq p^-\leq
p^+<\infty,$ then there is a $C=C_{p(\cdot)}$ such that for every
martingale $f=(f_n),$
\begin{align}\label{3.4}\|M(f)\|_{p(\cdot)}\leq
C\|m(f)\|_{p(\cdot)}.\end{align}
\end{thm}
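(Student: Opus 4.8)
The plan is to mirror the proof of Theorem 3.1 and reduce the comparison (3.4) to a single application of the Dellacherie-type Lemma 2.2. First I would note that, since $f_n^*$ and $S_n(f)$ are both nondecreasing in $n$, so are $M_n(f)=f_n^*\vee S_n(f)$ and $m_n(f)=f_n^*\wedge S_n(f)$; hence both suprema are attained in the limit and $M(f)=f^*\vee S(f)$, $m(f)=f^*\wedge S(f)$. In particular $M(f)=m(f)+|f^*-S(f)|$, so (3.4) is equivalent to $\||f^*-S(f)|\|_{p(\cdot)}\le C\|m(f)\|_{p(\cdot)}$; this identity is the reason one should expect the minimum, rather than just $f^*$ or $S(f)$, to control $M(f)$. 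The sequence $u_n=M_n(f)$ is nonnegative, nondecreasing and adapted with $u_\infty=M(f)$, so by Lemma 2.2(3) it suffices to exhibit a nonnegative random variable $v$ with $\|v\|_{p(\cdot)}\le C\|m(f)\|_{p(\cdot)}$ and
$$E(M(f)-M_{n-1}(f)\mid\Sigma_n)\le E(v\mid\Sigma_n),\qquad\forall n\ge0.$$

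Next I would estimate the increment of $M_n$. Introducing $g=(g_m)$ with $g_m=f_{n+m}-f_{n-1}$ and $\Sigma'_m=\Sigma_{m+n}$ exactly as in Theorem 3.1, the elementary bounds $f^*\le f_{n-1}^*+g^*$ and $S(f)\le S_{n-1}(f)+S(g)$, together with $(a+b)\vee(c+d)\le(a\vee c)+(b\vee d)$, give the pointwise inequality $M(f)-M_{n-1}(f)\le g^*\vee S(g)=M(g)$. Applying the conditioned Burkholder-Gundy-Davis inequality used in (3.2) and (3.3) to $g$ then shows that $E(g^*\mid\Sigma_n)$ and $E(S(g)\mid\Sigma_n)$ are comparable, so that $E(M(g)\mid\Sigma_n)\le C\,E(S(g)\mid\Sigma_n)$ with $S(g)=(S(f)^2-S_{n-1}(f)^2)^{1/2}$.

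The crux, and the step I expect to be the main obstacle, is to dominate this future increment by $m(f)=f^*\wedge S(f)$ and not merely by $S(f)$ or $f^*$: the naive bound $S(g)\le S(f)$ reproves only Theorem 3.1 and discards the minimum, whereas the whole content of Chevalier's inequality is that the regions on which $f^*$ and $S(f)$ differ greatly in size contribute little. Classically this is precisely where the good-$\lambda$ inequality is invoked, which, as emphasised in the introduction, is not available in the variable exponent setting. The route I would follow is to exploit the pointwise control of the maximal jump, namely $\sup_n|df_n|\le S(f)\wedge 2f^*\le 2\,m(f)$, and to run a Davis-type stopping-time decomposition of $f$ into a martingale with predictably bounded jumps and a process of bounded variation dominated by $\sup_n|df_n|$; each part is then estimated through the conditioned square-function bounds behind Lemmas 2.3 and 2.4, so that the resulting $v$ is comparable to $m(f)$. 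Inserting this $v$ into Lemma 2.2(3) yields (3.4). The delicate point throughout is to carry out this splitting entirely through conditional expectations, so that every constant depends only on $p$ and the argument never passes through a distributional (good-$\lambda$) estimate.
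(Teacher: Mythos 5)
Your outline reproduces the outer layers of the paper's argument --- the shift $g_m=f_{n+m}-f_{n-1}$, the increment bound $M(f)-M_{n-1}(f)\le M(g)$, the reduction through Lemma 2.2(3), the jump bound $d^*\le 2m(f)$, and a Davis decomposition --- but the crux that you yourself flag as ``the main obstacle'' is exactly the step you leave unresolved, and the tools you propose for it cannot close it. The conditioned Burkholder--Gundy--Davis inequality (and the square-function estimates behind Lemmas 2.3 and 2.4) only yield that $E(g^*\mid\Sigma_n)$ and $E(S(g)\mid\Sigma_n)$ are comparable, hence $E(M(g)\mid\Sigma_n)\le C\,E(S(g)\mid\Sigma_n)$; they say nothing about controlling the maximum by the \emph{minimum}, because conditional expectation does not interact with $\wedge$: $E(f^*\wedge S(f)\mid\Sigma_n)$ can be far smaller than $E(f^*\mid\Sigma_n)\wedge E(S(f)\mid\Sigma_n)$. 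This loss persists after the Davis decomposition: for the part with predictably bounded jumps, BGD-type conditional estimates still only compare its maximal and square functions; they never produce $m$ on the right-hand side, so the random variable $v\approx m(f)$ you need for Lemma 2.2 is not obtainable this way.

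What the paper uses, and what is missing from your sketch, is the classical \emph{conditional Chevalier-type inequality} (Long \cite{l3}, Theorem 3.5.5, quoted as (3.5) in the paper): if $(D_n)$ is a predictable control of the jumps, $|df_n|\le D_{n-1}$, then
\[
E_0\bigl(M(f)\bigr)\le C\,E_0\bigl(m(f)+D_\infty\bigr).
\]
This scalar, conditional result is the black box that replaces the good-$\lambda$ argument. The paper applies it to the shifted martingale, feeds the outcome into Lemma 2.2 to get the norm inequality $\|M(f)\|_{p(\cdot)}\le C\|m(f)+D_\infty\|_{p(\cdot)}$ for any martingale admitting a predictable jump control, and only then invokes the Davis decomposition (together with the convexity Lemma 2.3 and $d^*\le 2m(f)$) to remove the $D_\infty$ term. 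Your proposal has the shift, the Dellacherie reduction, and the Davis splitting --- the first and third layers --- but without inequality (3.5), or an independent proof of it, the middle layer is empty and the argument does not go through.
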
\vskip .1cm

\begin{proof} ~~
We begin with a well known result (see Long \cite{l3}, Theorem
3.5.5.): let $g=(g_m)$ be as in the proof of Theorem 3.1 and $(D_n)$
a predictable control of difference sequence $(df_n),$ i.e. $(D_n)$
is an increasing adapted r.v. sequence with $|df_n|\leq
D_{n-1},n\geq0, $ then there is a $C>0$ such that
\begin{align}\label{3.5}E_0(M(f))\leq CE_0(m(f)+D_\infty).\end{align}

 Now for any fixed $n$, we have
\begin{align*}
M(f)-M_{n-1}(f)&\leq (f^*-f^*_{n-1})\vee (S(f)-S_{n-1}(f))\\
&\leq g^*\vee S(g)=M(g),
\end{align*}
and
$$m(g)=g^*\wedge S(g) \leq 2f^*\wedge S(f) \leq 2m(f).$$
Using (3.5) we get
\begin{align*}
E(M(f)-M_{n-1}(f)|\Sigma_n)&\leq E(M(g)|\Sigma'_0)  \\
 &\leq CE(m(g)+ D'_\infty|\Sigma'_0)\leq
 CE(m(f)+D_\infty|\Sigma_n).
\end{align*}
Where $D'$ is a predictable control of $g$ with  $D'=(D_m'),
D'_m=D_{m+n}, D'_\infty=D_\infty.$ Lemma 2.2 guarantees that the
following inequality holds:
\begin{align} \label{3.6} \|M(f)\|_{p(\cdot)}\leq
C\|m(f)+D_\infty\|_{p(\cdot)}.
\end{align}

Making $f$'s Davis decomposition $f=g+h,$ with $|dg_n|\leq
4d^*_{n-1} $ and $$\sum|dh_n|\leq 2d^*+2\sum
E_{n-1}(d^*_n-d^*_{n-1}),~~~~~~~~~~~~~~$$ where $d^*_n=\sup_{0\leq
k\leq n}|df_k|,$ we have the following estimate:
$$d^*\leq 2f^*\wedge S(f)\leq 2m(f), ~~ \ ~~ h^*\vee S(h)\leq \sum|dh_n|$$
and
\begin{align*}
 m(g) &\leq (f^*+h^*)\wedge(S(f)+S(h))\\
&\leq f^*\wedge S(f)+\sum|dh_n| =m(f)+\sum|dh_n|.
\end{align*}
From Lemma 2.3, we then have
$$\|\sum|dh_n|\|_{p(\cdot)}\leq C\|d^*\|_{p(\cdot)}+C\|\sum E_{n-1}(d^*_n-d^*_{n-1})\|_{p(\cdot)}\leq C\|d^*\|_{p(\cdot)}.$$
By using (3.6), we obtain
\begin{align*}\|M(f)\|_{p(\cdot)}&\leq \|M(g)\|_{p(\cdot)}+\|M(h)\|_{p(\cdot)}\\
&\leq C\|m(g)\|_{p(\cdot)}+C \|d^*\|_{p(\cdot)}+C\|m(f)\|_{p(\cdot)}\\
&\leq C\|m(f)\|_{p(\cdot)}.\end{align*}
This completes the proof.
\end{proof}

\vskip .3cm

 \section{Some equivalent relations
between martingale spaces}

 Let $p\in \mathcal{P}$ with $1\leq p\leq p^+<\infty,$
$\lambda=(\lambda_n)$ be a nonnegative and increasing adapted
sequence
 with $\lambda_\infty=\lim_{n\rightarrow\infty}\lambda_n\in
L^{p(\cdot)}.$ We denote by $\Lambda$ the set of all such sequences
and define two martingale spaces as follows:
$$~~~\mathcal{Q}_{p(\cdot)}=\{f=(f_n): \exists \lambda\in\Lambda, S_n(f)\leq \lambda_{n-1},
 \|f\|_{\mathcal{Q}_{p(\cdot)}}=\inf_{\lambda\in\Lambda}\|\lambda_\infty\|_{p(\cdot)}<\infty\},$$
$$~~~ \mathcal{D}_{p(\cdot)}=\{f=(f_n): ~ \exists \lambda\in\Lambda, ~|f_n|\leq
\lambda_{n-1}, ~
 \|f\|_{\mathcal{D}_{p(\cdot)}} = \inf_{\lambda\in\Lambda}\|\lambda_\infty\|_{p(\cdot)}<\infty\}.$$
It is easy to check that both two martingale spaces above are Banach
spaces, and as in the classical case the norms of
$\mathcal{Q}_{p(\cdot)}, \mathcal{D}_{p(\cdot)}$ can be reached, we
call such $\lambda$ an optimal predictable control of $f.$ We also
introduce the following martingale space $\mathcal{A}_{p(\cdot)}:$
$$\mathcal{A}_{p(\cdot)}=\{f=(f_n): \|f\|_{\mathcal{A}_{p(\cdot)}}=\|\sum_{n=0}^\infty |df_n|\|_{p(\cdot)}<\infty\}.$$

To prove the equivalence between $\mathcal{Q}_{p(\cdot)}$ and
$\mathcal{D}_{p(\cdot)}$, we first need the following theorem.

\begin{thm}\label{41}~~   Let $p\in \mathcal{P}$ with $1\leq p^-\leq
p^+<\infty,$ then there are $C=C_{p(\cdot)}$ such that the following
inequalities hold for every martingale $f=(f_n),$
\begin{align}\label{4.0} \| f
\|_{H^*_{p(\cdot)}} \leq \| f \|_{\mathcal{D}_{p(\cdot)}}, ~~~~ \ \
\ \ \ ~~~~ \| f \|_{H^S_{p(\cdot)}} \leq \| f
\|_{\mathcal{Q}_{p(\cdot)}},
\end{align}
 \begin{align}\label{4.1}~~~~~~\|f\|_{H^*_{p(\cdot)}} \leq C\|f\|_{\mathcal{Q}_{p(\cdot)}},~~~ \ \ ~~\|f\|_{H^S_{p(\cdot)}} \leq C\|f\|_{\mathcal{D}_{p(\cdot)}}.\end{align}
\end{thm}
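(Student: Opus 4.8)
The plan is to separate the four inequalities into the two pointwise-monotonicity bounds of (4.0) and the two deeper bounds of (4.1), and then to obtain (4.1) by simply chaining (4.0) with the variable-exponent Burkholder-Gundy-Davis inequality already proved in Theorem 3.1. In this way no new analytic estimate is needed.

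First I would establish (4.0) directly from the definition of a predictable control. Let $\lambda=(\lambda_n)\in\Lambda$ be any sequence witnessing $f\in\mathcal{D}_{p(\cdot)}$, so $|f_n|\leq\lambda_{n-1}$ for every $n$. Since $\lambda$ is nonnegative and increasing we have $\lambda_{n-1}\leq\lambda_\infty$, hence $f^*=\sup_n|f_n|\leq\lambda_\infty$ pointwise; monotonicity of the Luxemburg norm gives $\|f^*\|_{p(\cdot)}\leq\|\lambda_\infty\|_{p(\cdot)}$, and taking the infimum over all admissible $\lambda$ yields $\|f\|_{H^*_{p(\cdot)}}\leq\|f\|_{\mathcal{D}_{p(\cdot)}}$. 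The second bound in (4.0) is proved the same way: if $S_n(f)\leq\lambda_{n-1}$ then, since $S_n(f)\uparrow S(f)$, we get $S(f)\leq\lambda_\infty$ pointwise, and the identical norm-monotonicity-plus-infimum argument gives $\|f\|_{H^S_{p(\cdot)}}\leq\|f\|_{\mathcal{Q}_{p(\cdot)}}$.

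For (4.1) I would chain the two halves of the Burkholder-Gundy-Davis inequality (3.1) with the two bounds just obtained. Using the left half $\|f^*\|_{p(\cdot)}\leq C\|S(f)\|_{p(\cdot)}$ together with the second inequality of (4.0) gives
$$\|f\|_{H^*_{p(\cdot)}}\leq C\|S(f)\|_{p(\cdot)}=C\|f\|_{H^S_{p(\cdot)}}\leq C\|f\|_{\mathcal{Q}_{p(\cdot)}},$$
which is the first inequality of (4.1); symmetrically, the right half $\|S(f)\|_{p(\cdot)}\leq C\|f^*\|_{p(\cdot)}$ together with the first inequality of (4.0) gives
$$\|f\|_{H^S_{p(\cdot)}}\leq C\|f^*\|_{p(\cdot)}=C\|f\|_{H^*_{p(\cdot)}}\leq C\|f\|_{\mathcal{D}_{p(\cdot)}},$$
the second inequality of (4.1).

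I do not expect a genuine obstacle in this theorem: the entire analytic content has already been absorbed into Theorem 3.1, and what remains is bookkeeping about predictable controls. The only points to watch are that the constant $C$ depends only on $p$, which it does since it is inherited verbatim from Theorem 3.1, and that passing to the infimum over $\Lambda$ on the right-hand sides is legitimate, which is immediate because the $\mathcal{D}$- and $\mathcal{Q}$-norms are infima over the same class $\Lambda$ while the pointwise dominations $f^*\leq\lambda_\infty$ and $S(f)\leq\lambda_\infty$ hold for each individual control.
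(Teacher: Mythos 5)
Your proposal is correct and follows exactly the paper's route: the paper disposes of the first pair of inequalities as ``obvious from their definitions'' (the pointwise dominations $f^*\leq\lambda_\infty$ and $S(f)\leq\lambda_\infty$ plus monotonicity of the Luxemburg norm, which you spell out) and obtains the second pair by combining Theorem 3.1 with the first pair, precisely your chaining argument. The only difference is that you supply the details the paper leaves implicit.
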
 \vskip .1cm

 \begin{proof}~~ The two inequalities in (4.1) are obvious from their definitions. And the two inequalities in (4.2) come from (3.1) and (4.1).
\end{proof}

We now prove a Davis' decomposition theorem for martingales in
$H^S_{p(\cdot)}$ and $H^*_{p(\cdot)}.$

\begin{thm}\label{42}~~   Let $p\in \mathcal{P}$ with $1\leq p^-\leq
p^+<\infty,$ then
\begin{enumerate}
\item[(1)]~ Every $f=(f_n)\in H^S_{p(\cdot)}$ has a decomposition $f=g+h$
with $g\in \mathcal{Q}_{p(\cdot)}, h\in \mathcal{A}_{p(\cdot)}$ such
that
\begin{align}\label{4.2} \|g\|_{\mathcal{Q}_{p(\cdot)}} \leq
C\|f\|_{H^S_{p(\cdot)}},~~~\|h\|_{\mathcal{A}_{p(\cdot)}} \leq C \|
f \|_{H^S_{p(\cdot)}}.\end{align}
\item[(2)]~ Every $f=(f_n)\in H^*_{p(\cdot)}$ has a decomposition $f=g+h$
with $g\in \mathcal{D}_{p(\cdot)}, h\in \mathcal{A}_{p(\cdot)}$ such
that
\begin{align}\label{4.3} \|g\|_{\mathcal{D}_{p(\cdot)}} \leq C\|f\|_{H^*_{p(\cdot)}}, ~~~ \|h\|_{\mathcal{A}_{p(\cdot)}} \leq C\| f
\|_{H^*_{p(\cdot)}}. \end{align}
\end{enumerate}\end{thm}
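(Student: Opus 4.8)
The plan is to prove Theorem 4.2 by adapting the classical Davis decomposition to the variable exponent setting, relying on the machinery already established in the paper — principally the convexity Lemma 2.3, the Dellacherie-type Lemma 2.2, and the norm-sublinearity coming from Lemma 1.1(4). The starting point for both parts is the explicit Davis splitting of the difference sequence. Writing $d^*_n=\sup_{0\le k\le n}|df_k|$ and $d^*=\sup_n|df_n|$, I would set
\begin{align}\label{plan:dg}
dg_n = df_n\,\chi_{\{|df_n|\le 2d^*_{n-1}\}} - E_{n-1}\bigl(df_n\,\chi_{\{|df_n|\le 2d^*_{n-1}\}}\bigr),
\end{align}
and let $dh_n=df_n-dg_n$, so that $f=g+h$ with both $g$ and $h$ martingales. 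The two structural facts I need are the pointwise bound $|dg_n|\le 4d^*_{n-1}$ (already quoted in the proof of Theorem 3.2) and the control $\sum_n|dh_n|\le 2d^*+2\sum_n E_{n-1}(d^*_n-d^*_{n-1})$, again quoted in that proof. These are purely pathwise/conditional-expectation identities, identical to the classical case, so I would cite them rather than rederive them.

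For part (2), which I would treat first since it is the cleaner of the two, the bound $|dg_n|\le 4d^*_{n-1}$ immediately exhibits a predictable control for $g$: the sequence $\lambda_n=4d^*_n$ is nonnegative, increasing, adapted, and satisfies $|dg_n|\le\lambda_{n-1}$, hence $|g_n|\le\sum_{k\le n}|dg_k|$ need not be controlled directly — rather I use that $\mathcal{D}_{p(\cdot)}$ is defined via $|g_n|\le\lambda_{n-1}$, so I would instead take the predictable control built from partial sums, with $\lambda_\infty\le C\,d^*$. Since $d^*\le 2f^*$, Lemma 1.3 (or directly $\|d^*\|_{p(\cdot)}\le 2\|f^*\|_{p(\cdot)}$) gives $\|g\|_{\mathcal{D}_{p(\cdot)}}\le C\|f\|_{H^*_{p(\cdot)}}$. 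For $h$, I apply the convexity Lemma 2.3 to the sum $\sum_n E_{n-1}(d^*_n-d^*_{n-1})$ exactly as in the proof of Theorem 3.2, obtaining
\begin{align}\label{plan:hbound}
\Bigl\|\sum_n|dh_n|\Bigr\|_{p(\cdot)}\le C\|d^*\|_{p(\cdot)}+C\Bigl\|\sum_n E_{n-1}(d^*_n-d^*_{n-1})\Bigr\|_{p(\cdot)}\le C\|d^*\|_{p(\cdot)}\le C\|f^*\|_{p(\cdot)},
\end{align}
which is precisely $\|h\|_{\mathcal{A}_{p(\cdot)}}\le C\|f\|_{H^*_{p(\cdot)}}$.

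Part (1) runs along the same lines but replaces the maximal function by the square function throughout: here I would control $g$ inside $\mathcal{Q}_{p(\cdot)}$ by showing $S_n(g)\le\lambda_{n-1}$ for a predictable $\lambda$ with $\lambda_\infty\le C\,d^*$, using $|dg_n|\le 4d^*_{n-1}$ together with the elementary estimate $S_n(g)^2=\sum_{k\le n}|dg_k|^2\le 16\sum_{k\le n}(d^*_{k-1})^2$, so that a predictable control of the form $\lambda_{n-1}=4(\sum_{k\le n}(d^*_{k-1})^2)^{1/2}$ works and is dominated by $C\,d^*\,S(f)$-type quantities; the clean route is $d^*\le 2 m(f)\le 2S(f)$ giving $\|d^*\|_{p(\cdot)}\le 2\|S(f)\|_{p(\cdot)}$. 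The $h$-estimate is \emph{identical} to \eqref{plan:hbound} with $f^*$ replaced by $S(f)$. \textbf{The main obstacle} I anticipate is verifying that the predictable control for $g$ in the $\mathcal{Q}_{p(\cdot)}$ case genuinely has $L^{p(\cdot)}$-norm bounded by $C\|S(f)\|_{p(\cdot)}$: the pointwise inequality $d^*\le 2S(f)$ is elementary, but one must be careful that the predictable majorant of $S(g)$ is itself comparable to $d^*$ and not merely to $S(g)$, since in the variable-exponent norm I cannot invoke any $L^p$-duality or good-$\lambda$ argument — every passage from a pointwise bound to a norm bound must go through the monotonicity of $\|\cdot\|_{p(\cdot)}$ (Lemma 1.3) or through Lemma 2.3. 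Once the predictable controls are correctly identified, the remaining steps are norm-monotonicity and the triangle inequality, which are routine.
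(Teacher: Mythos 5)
Your overall strategy (a Davis splitting of the differences, Lemma 2.3 for the $h$-part, a predictable control for the $g$-part) is sound and close in spirit to the paper's, and your $h$-estimates are correct; but both of the predictable controls you propose for $g$ fail, and this is a genuine gap. In part (2) you claim that ``the predictable control built from partial sums'' satisfies $\lambda_\infty\le C\,d^*$: it does not. Summing the difference bound $|dg_k|\le 4d^*_{k-1}$ gives only $|g_n|\le\sum_{k\le n}|dg_k|\le 4\sum_{k\le n}d^*_{k-1}$, and this sum grows without bound in $n$: take $f_0=0$, $df_1=\pm1$, $df_k=0$ for $k\ge 2$; then $d^*=f^*=S(f)=1$ while $\sum_{k\le n}d^*_{k-1}=n-1\to\infty$. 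The same example kills your control in part (1): there $\lambda_{n-1}=4\bigl(\sum_{k\le n}(d^*_{k-1})^2\bigr)^{1/2}\approx 4\sqrt{n}\to\infty$ while $S(f)=d^*=1$, so $\lambda_\infty$ is not dominated by any ``$C\,d^*\,S(f)$-type quantity'' --- it is infinite. The underlying problem is structural: a predictable bound on the \emph{differences} of $g$ can never, by summation alone, produce a predictable bound on $g_n$ or $S_n(g)$ that is uniform in $n$.

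The missing idea --- and the paper's actual route --- is to control $g$ through $f-h$ with a one-step index shift, not through its own differences. For part (1): $S_n(g)\le S_{n-1}(g)+|dg_n|\le S_{n-1}(f)+S_{n-1}(h)+|dg_n|$, and every term on the right is $\Sigma_{n-1}$-measurable, since $S_{n-1}(f)$ is adapted at time $n-1$, $S_{n-1}(h)\le\sum_{k\le n-1}|dh_k|\le 2d^*_{n-1}+2\sum_{k\le n-1}E_{k-1}(d^*_k-d^*_{k-1})$, and $|dg_n|\le 4d^*_{n-1}$. Using $d^*_{n-1}\le S_{n-1}(f)$, this exhibits a genuine predictable control whose limit is at most $7S(f)+2\sum_k E_{k-1}(d^*_k-d^*_{k-1})$, and Lemma 2.3 bounds its norm by $C\|S(f)\|_{p(\cdot)}$. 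For part (2) the same shift gives $|g_n|\le f^*_{n-1}+|h_{n-1}|+|dg_n|$, again predictable, with norm of the limit at most $C\|f^*\|_{p(\cdot)}$ via $d^*\le 2f^*$ and Lemma 2.3. With this correction your proof goes through and coincides in substance with the paper's: the paper splits on the indicator of $\{\lambda_n\le 2\lambda_{n-1}\}$ with $\lambda_n=S_n(f)$ (resp. $f^*_n$) rather than on $\{|df_n|\le 2d^*_{n-1}\}$, but it obtains the $\mathcal{Q}_{p(\cdot)}$ (resp. $\mathcal{D}_{p(\cdot)}$) membership of $g$ by exactly this $f-h$ shift argument followed by Lemma 2.3.
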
\vskip .1cm

\begin{proof} ~~ Here we
only prove (4.3), the proof of (4.4) is similar.

Let $\lambda=(\lambda_n)$ be an increasing control of
$(S_n(f))_{n\geq0}: |S_n(f)|\leq \lambda_n,  \lambda_\infty \in
L^{p(\cdot)}.$ Define
$$dh_n=df_n\chi_{\{\lambda_n>2\lambda_{n-1}\}}-E_{n-1}(df_n\chi_{\{\lambda_n>2\lambda_{n-1}\}}),$$
$$dg_n=df_n\chi_{\{\lambda_n\leq2\lambda_{n-1}\}}-E_{n-1}(df_n\chi_{\{\lambda_n\leq2\lambda_{n-1}\}})$$
and $h_n=\sum_{k=0}^ndh_k, g_n=\sum_{k=0}^ndg_k.$ It is clear that
bosh $(h_n)_{n\geq0}$ and $(g_n)_{n\geq0}$ are martingales and
$f_n=g_n+h_n, \forall n\geq0.$ As usual, we have
$$|df_k|\chi_{\{\lambda_k>2\lambda_{k-1}\}}\leq \lambda_k\chi_{\{\lambda_k>2\lambda_{k-1}\}}\leq 2\lambda_k-2\lambda_{k-1},$$
thus
$$\sum_{k=0}^\infty |dh_k|\leq 2\lambda_\infty+2\sum_{k=0}^\infty E_{k-1}(\lambda_k-\lambda_{k-1}).$$
Lemma 2.3 guarantees
\begin{align}\label{4.4}\|h\|_{\mathcal{A}_{p(\cdot)}} = \|\sum_{k=0}^\infty |dh_k|\|_{p(\cdot)}\leq C\|\lambda_\infty\|_{p(\cdot)}.\end{align}

On the other hand, since
$|df_k|\chi_{\{\lambda_k\leq2\lambda_{k-1}\}}\leq 2\lambda_{k-1}$
and $|dg_k|\leq 4\lambda_{k-1},$ thus
\begin{align*}
S_n(g)&\leq S_{n-1}(g)+|dg_n|\leq S_{n-1}(f)+S_{n-1}(h)+4\lambda_{n-1}\\
&\leq \lambda_{n-1}+2\lambda_{n-1}+2\sum_{k=0}^{n-1} E_{k-1}(\lambda_k-\lambda_{k-1})+4\lambda_{n-1},
\end{align*}
so $g \in \mathcal{Q}_{p(\cdot)},$ also due to Lemma 2.3,
\begin{align}\label{4.5}\|g\|_{\mathcal{Q}_{p(\cdot)}}
\leq \|7\lambda_\infty+2\sum_{k=0}^\infty
E_{k-1}(\lambda_k-\lambda_{k-1})\|_{p(\cdot)} \leq
C\|\lambda_\infty\|_{p(\cdot)}.\end{align}
 Taking
$\lambda_n=S_n(f),$ then (4.3) follows from (4.5) and (4.6).
\end{proof}

The following statement is about the equivalence between
$\mathcal{Q}_{p(\cdot)}$ and $\mathcal{D}_{p(\cdot)}$. Refer to Chao
and Long \cite{c1}, also see \cite{w} for the classical version.

\begin{thm}\label{43}~~   Let $p\in \mathcal{P}$ with $1\leq p^-\leq
p^+<\infty,$ then there is a $C=C_{p(\cdot)}>0$ such that for every
martingale $f=(f_n),$
\begin{align}\label{4.6} C^{-1}\|f\|_{\mathcal{D}_{p(\cdot)}}
\leq \|f\|_{\mathcal{Q}_{p(\cdot)}}\leq
C\|f\|_{\mathcal{D}_{p(\cdot)}}.\end{align}
\end{thm}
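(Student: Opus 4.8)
The plan is to prove the two inequalities in \eqref{4.6} separately, and in both cases to manufacture an explicit admissible control for one space out of an optimal control for the other. The whole argument rests on a single observation: in each of $\mathcal{Q}_{p(\cdot)}$ and $\mathcal{D}_{p(\cdot)}$ the martingale increments are themselves \emph{predictably} dominated, so the non-predictable ``current'' term can be absorbed into a predictable quantity. This is exactly why no regularity hypothesis is required.

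First I would prove $\|f\|_{\mathcal{D}_{p(\cdot)}}\le C\|f\|_{\mathcal{Q}_{p(\cdot)}}$. Let $\lambda\in\Lambda$ be an optimal control for $f$ in $\mathcal{Q}_{p(\cdot)}$, so $S_n(f)\le\lambda_{n-1}$ and $\|\lambda_\infty\|_{p(\cdot)}=\|f\|_{\mathcal{Q}_{p(\cdot)}}$. Since $|df_n|^2\le S_n(f)^2$, we have $|df_n|\le S_n(f)\le\lambda_{n-1}$, whence $|f_n|\le|f_{n-1}|+|df_n|\le f^*_{n-1}+\lambda_{n-1}$. Setting $\mu_n=f^*_n+\lambda_n$ yields a nonnegative, nondecreasing, adapted sequence with $|f_n|\le\mu_{n-1}$, i.e.\ a predictable control of $(f_n)$; hence $\|f\|_{\mathcal{D}_{p(\cdot)}}\le\|\mu_\infty\|_{p(\cdot)}\le\|f^*\|_{p(\cdot)}+\|\lambda_\infty\|_{p(\cdot)}$. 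The first term equals $\|f\|_{H^*_{p(\cdot)}}$, which is $\le C\|f\|_{\mathcal{Q}_{p(\cdot)}}$ by Theorem \ref{41}, and the second equals $\|f\|_{\mathcal{Q}_{p(\cdot)}}$, giving the claimed bound.

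The reverse inequality $\|f\|_{\mathcal{Q}_{p(\cdot)}}\le C\|f\|_{\mathcal{D}_{p(\cdot)}}$ is symmetric. Take an optimal $\mathcal{D}_{p(\cdot)}$-control $\lambda$ with $|f_n|\le\lambda_{n-1}$; then $|df_n|\le|f_n|+|f_{n-1}|\le\lambda_{n-1}+\lambda_{n-2}\le2\lambda_{n-1}$, and from $S_n(f)\le S_{n-1}(f)+|df_n|$ (using $(a^2+b^2)^{1/2}\le a+b$) we obtain $S_n(f)\le S_{n-1}(f)+2\lambda_{n-1}$. Now $\mu_n=S_n(f)+2\lambda_n$ is a nonnegative, nondecreasing, adapted sequence that dominates $S_n(f)$ at index $n-1$, hence an admissible control for $\mathcal{Q}_{p(\cdot)}$, giving $\|f\|_{\mathcal{Q}_{p(\cdot)}}\le\|S(f)\|_{p(\cdot)}+2\|\lambda_\infty\|_{p(\cdot)}$. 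Here $\|S(f)\|_{p(\cdot)}=\|f\|_{H^S_{p(\cdot)}}\le C\|f\|_{\mathcal{D}_{p(\cdot)}}$ by Theorem \ref{41}, which finishes the proof.

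The only genuinely delicate point—the one I would verify most carefully—is that each $\mu_n$ truly lies in $\Lambda$: one must check that $f^*_{n-1}$ and $S_{n-1}(f)$ are $\Sigma_{n-1}$-measurable (so that $\mu_{n-1}$ is indeed predictable), that $\mu$ is nondecreasing, and that $\mu_\infty=f^*+\lambda_\infty$ (resp.\ $S(f)+2\lambda_\infty$) belongs to $L^{p(\cdot)}$. These are routine, and the only substantial input beyond them is the pair of ``crossed'' estimates of Theorem \ref{41} (which already encode the Burkholder--Gundy--Davis inequality of Theorem \ref{31}). I expect the main conceptual hurdle, rather than any computation, to be recognizing that the jump bounds $|df_n|\le\lambda_{n-1}$ and $|df_n|\le2\lambda_{n-1}$ hold \emph{inside} each space, so that the ambient maximal/square functions enter only through their $L^{p(\cdot)}$-norms and not pointwise.
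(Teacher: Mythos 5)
Your proof is correct and follows essentially the same route as the paper's: in each direction you manufacture a predictable control ($f^*_{n-1}+\lambda_{n-1}$, resp.\ $S_{n-1}(f)+2\lambda_{n-1}$) from an optimal control of the other space, then close the estimate with the crossed inequalities of Theorem \ref{41}. The differences are purely cosmetic (the order of the two directions and the constant $\lambda_{n-1}$ versus $2\lambda_{n-1}$ in one step).
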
 \

\begin{proof}~~ Let $f=(f_n)\in \mathcal{D}_{p(\cdot)}$ and
$\lambda=(\lambda_n)$ be its optimal predictable control: $|f_n|\leq
\lambda_{n-1},
\|f\|_{\mathcal{D}_{p(\cdot)}}=\|\lambda_\infty\|_{p(\cdot)}$. Since
$$S_n(f)\leq S_{n-1}(f)+|df_n|\leq S_{n-1}(f)+2\lambda_{n-1},$$
namely, $(S_{n-1}(f)+2\lambda_{n-1})_{n\geq0}$ is a predictable
control of $(S_n(f))_{n\geq0},$ then $f\in \mathcal{Q}_{p(\cdot)}$
and it follows from (4.2) that
$$\|f\|_{\mathcal{Q}_{p(\cdot)}}\leq \|S(f)+2\lambda_\infty\|_{p(\cdot)}
\leq \|f\|_{H^S_{p(\cdot)}}+2\|\lambda_\infty\|_{p(\cdot)}\leq C\|f\|_{\mathcal{D}_{p(\cdot)}}.$$

Conversely, if $f=(f_n)\in \mathcal{Q}_{p(\cdot)}$ and
$\lambda=(\lambda_n)$ is its predictable control. Since $$|f_n|\leq
|f_{n-1}|+|df_n|\leq f^*_{n-1}+2\lambda_{n-1},$$ then $f\in
\mathcal{D}_{p(\cdot)}.$ Using (4.2) again we obtain
$$\|f\|_{\mathcal{D}_{p(\cdot)}}\leq \|f^*+2\lambda_\infty\|_{p(\cdot)}
\leq \|f\|_{H^*_{p(\cdot)}}+2\|\lambda_\infty\|_{p(\cdot)}\leq
C\|f\|_{\mathcal{Q}_{p(\cdot)}}.$$ The proof is complete.
\end{proof}

\begin{thm}\label{44}~~   If $p\in \mathcal{P}$ with $1\leq p^-\leq
p^+<\infty,$ then there is a $C=C_{p(\cdot)}$ such that for every
martingale $f=(f_n)$ with $f_0=0,$
\begin{align}\label{4.7}
\|f\|_{H^s_{p(\cdot)}}\leq C\|f\|_{\mathcal{D}_{p(\cdot)}},~~\|f\|_{H^S_{p(\cdot)}}
\leq C\|f\|_{\mathcal{D}_{p(\cdot)}}.\end{align}
\end{thm}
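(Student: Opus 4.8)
The plan is to treat the two inequalities separately. The second one, $\|f\|_{H^S_{p(\cdot)}}\le C\|f\|_{\mathcal D_{p(\cdot)}}$, requires nothing new: it is the estimate already proved in Theorem \ref{41} (obtained there from the Burkholder--Gundy--Davis inequality (3.1), the trivial bound $\|f\|_{H^S_{p(\cdot)}}\le\|f\|_{\mathcal Q_{p(\cdot)}}$, and the equivalence $\mathcal D_{p(\cdot)}\approx\mathcal Q_{p(\cdot)}$ of Theorem \ref{43}), so I would simply quote it. The whole content lies in the first inequality $\|s(f)\|_{p(\cdot)}\le C\|f\|_{\mathcal D_{p(\cdot)}}$, and I would prove it by imitating the architecture of Theorem \ref{31}: produce a \emph{conditioned}, constant-exponent estimate for the conditional square function against a predictable control, and then lift it to the variable exponent norm through the Dellacherie-type Lemma \ref{23}.

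Concretely, fix a predictable control $\lambda=(\lambda_n)$ of $f$, so that $|f_n|\le\lambda_{n-1}$ and $\|\lambda_\infty\|_{p(\cdot)}$ is as close as we like to $\|f\|_{\mathcal D_{p(\cdot)}}$. Exactly as in Theorem \ref{31}, for each fixed $n$ I introduce the shifted martingale $g=(g_m)_{m\ge0}$, $g_m=f_{m+n}-f_{n-1}$, adapted to $\Sigma'_m=\Sigma_{m+n}$; its differences are $dg_m=df_{m+n}$, whence
$$s(g)^2=\sum_{m\ge0}E_{m+n-1}|df_{m+n}|^2=\sum_{k\ge n}E_{k-1}|df_k|^2=s(f)^2-s_{n-1}(f)^2,$$
and $D'_{m-1}=2\lambda_{m+n-1}$ is a predictable control of $g$ with $D'_\infty=2\lambda_\infty$. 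The elementary inequality $\sqrt{a^2-b^2}\ge a-b$ for $a\ge b\ge0$ gives $s(f)-s_{n-1}(f)\le s(g)$. Granting the conditioned estimate $E\big(s(g)\,\big|\,\Sigma'_0\big)\le C\,E\big(D'_\infty\,\big|\,\Sigma'_0\big)$ and recalling $\Sigma'_0=\Sigma_n$, I obtain
$$E\big(s(f)-s_{n-1}(f)\,\big|\,\Sigma_n\big)\le 2C\,E(\lambda_\infty|\Sigma_n),\qquad\forall\,n.$$
This is precisely hypothesis (2.1) of Lemma \ref{23} for the nonnegative, nondecreasing, adapted sequence $u_n=s_n(f)$ and the majorant $v=2C\lambda_\infty$. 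The decisive point is that $u_n$ is \emph{linear} in the square function rather than quadratic, so part (3) of Lemma \ref{23} yields directly $\|s(f)\|_{p(\cdot)}=\|u_\infty\|_{p(\cdot)}\le C\|v\|_{p(\cdot)}\le C\|\lambda_\infty\|_{p(\cdot)}$ for the whole range $1\le p^-\le p^+<\infty$; taking the infimum over $\lambda$ finishes the proof.

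The one genuinely nontrivial ingredient, and the step I expect to be the main obstacle, is the conditioned predictable estimate $E(s(g)|\Sigma'_0)\le CE(D'_\infty|\Sigma'_0)$. It cannot be produced the way the $S$-estimate was in Theorem \ref{31}: there is no conditioned inequality bounding $s(g)$ by $g^*$ or by $S(g)$, because below exponent $2$ the conditional square function is genuinely larger than the square function (one only has $\|S\|\lesssim\|s\|$ there), so the predictable control must enter in an essential way. Nor can one shortcut through the squared process $u_n=s_n(f)^2$: since $s(f)^2$ is the predictable compensator of $f^2$ it does satisfy a clean Dellacherie hypothesis, $E(s(f)^2-s_n(f)^2|\Sigma_n)=E(S(f)^2-S_n(f)^2|\Sigma_n)\le E(S(f)^2|\Sigma_n)$, but applying Lemma \ref{23} to it with $v=S(f)^2$ and converting back by Lemma \ref{lem12}, $\|s(f)^2\|_{q(\cdot)}=\|s(f)\|^2_{2q(\cdot)}$, only reaches exponents $\ge2$, which is exactly why I avoid it. The conditioned $s$-inequality is the constant-exponent ($p=1$) heart of the matter; since good-$\lambda$ methods are forbidden only in the variable exponent setting, it may be settled classically (for instance by stopping $g$ at the predictable times $\inf\{m:D'_m>a\}$, using the $L^2$ identity $E(s(g^{\,\cdot})^2)=E(|g^{\,\cdot}_\infty|^2)$ on each stopped piece, and summing over dyadic levels of $a$) or quoted from the classical predictable Davis decomposition, and then promoted to all variable exponents $p$ by the single application of Lemma \ref{23} above.
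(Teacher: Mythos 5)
Your architecture is sound and the proposal is essentially correct, but it takes a genuinely different route from the paper. You correctly observe that the second inequality of (4.8) is literally the second inequality of (4.2) in Theorem \ref{41}, and your plan for the first inequality --- verify the Dellacherie hypothesis (2.1) for the \emph{linear} process $u_n=s_n(f)$ against $v=C\lambda_\infty$ and invoke Lemma \ref{23}(3) --- works for the whole range $1\le p^-\le p^+<\infty$ precisely for the reason you identify (the quadratic process $u_n=s_n(f)^2$ only reaches $p^-\ge 2$). The paper does something different: following Garsia \cite{g}, it forms the transform $g_n=\sum_{i=1}^n df_i/\sqrt{\lambda_{i-1}}$ of the optimal (positive) control, proves the pointwise bounds $g^*\le 2\sqrt{\lambda_\infty}$ and $s(f)^2\le\lambda_\infty s(g)^2$, then applies Cauchy--Schwarz to get $Es(f)^p\le (E\lambda_\infty^p)^{1/2}(Es(g)^{2p})^{1/2}$ and closes with the modular chain $Es(g)^{2p}\le C\,ES(g)^{2p}\le C\,Eg^{*2p}\le C\,E\lambda_\infty^p$ (Lemma \ref{25}, then inequality (3.3) via Lemma \ref{23}(2), then the bound on $g^*$), finishing by the usual normalization. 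So the paper needs no conditioned $s$-inequality and no external classical input beyond its own lemmas, whereas your proof outsources its heart.

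That outsourced ingredient --- $E(s(g)\mid\Sigma'_0)\le CE(D'_\infty\mid\Sigma'_0)$ for a martingale with predictable control $|g_m|\le D'_{m-1}$ --- is true, but the cleanest way to get it is not your dyadic-stopping sketch: it is exactly the conditional, $p=1$ instance of the paper's own transform argument. Setting $\tilde g_m=\sum_{i\le m}dg_i/\sqrt{D'_{i-1}}$, one has $\tilde g^*\le 2\sqrt{D'_\infty}$ and $s(g)^2\le D'_\infty\, s(\tilde g)^2$, so conditional Cauchy--Schwarz plus conditional orthogonality of martingale differences give $E_0\,s(g)\le (E_0 D'_\infty)^{1/2}(E_0\, s(\tilde g)^2)^{1/2}\le C\,E_0 D'_\infty$. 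If you go this way, two details need care: with the paper's convention $\Sigma_{-1}=\{\Omega,\emptyset\}$ you must take $\Sigma'_{-1}=\Sigma_{n-1}$ for the shifted martingale, since otherwise the first term of $s(g)^2$ is $E|df_n|^2$ rather than $E_{n-1}|df_n|^2$ and your identity $s(g)^2=s(f)^2-s_{n-1}(f)^2$ fails; and the $m=0$ difference is not conditionally centered with respect to $\Sigma'_{-1}$ inside the orthogonality computation, so it must be estimated separately (it is harmless because $|dg_0|\le D'_{-1}$). Indeed, you can avoid the shifted martingale entirely by verifying hypothesis (2.2) of Lemma \ref{23} for the predictable sequence $u_n=s_n(f)$ ($u_0=0$ since $f_0=0$), applying the conditional Garsia argument to the tail sum $\sum_{k>n}df_k/\sqrt{\lambda_{k-1}}$. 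In exchange for this extra work, your route has the merit of uniformity with the Dellacherie-lifting scheme of Section 3 and never divides by $\lambda$; the paper's proof is self-contained and additionally delivers the modular inequality $Es(f)^p\le C\,E\lambda_\infty^p$.
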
\vskip .1cm

\begin{proof}~~ Here we use Garsia's idea which was used to prove theorem
4.1.2 in \cite{g}. Let $f\in \mathcal{D}_{p(\cdot)}$ and
$\lambda=(\lambda_n)$ be its optimal predictable control:
$\lambda_n$ is positive and increasing, $|f_n|\leq \lambda_{n-1}$
and $ \|\lambda_\infty\|_{p(\cdot)}=\|f\|_{\mathcal{D}_{p(\cdot)}}.$
Define
$$g_n= \sum_{i=1}^n\frac{df_i}{\sqrt{\lambda_{i-1}}},~~ \forall n\geq1. $$
A simple computation shows that
$$g_n= \sum_{i=1}^n\frac{f_i-f_{i-1}}{\sqrt{\lambda_{i-1}}}
=\frac{f_n}{\sqrt{\lambda_{n-1}}}+\sum_{i=1}^{n-1}\frac{f_i}{\sqrt{\lambda_{i-1}\lambda_i}}(\sqrt{\lambda_i}-\sqrt{\lambda_{i-1}}),$$
and
\begin{align}\label{4.8}|g_n|\leq 2\sqrt{\lambda_{n-1}},~~ g^*\leq
2\sqrt{\lambda_\infty}, ~~ Eg^{*2}\leq 4 E\lambda_\infty<\infty.
\end{align}
So $g=(g_n)$ is an $L^2$-bounded martingale, it
converges to
$g_\infty=\sum_{i=1}^\infty\frac{df_i}{\sqrt{\lambda_{i-1}}}$ a.e.
and in $L^2.$ Notice that
$$f_n= \sum_{i=1}^n \sqrt{\lambda_{i-1}} dg_i ,~~ \forall n\geq1, $$
then
\begin{align}\label{4.9}s^2_n(f)=\sum_{i=1}^n\lambda_{i-1}E_{i-1}|dg_i|^2\leq \lambda_{n-1} s^2_n(g).\end{align}
and
\begin{align}\label{4.10}S^2_n(f)\leq \lambda_{n-1} S^2_n(g).\end{align}
By H\"{o}lder inequality we have
\begin{align}\label{4.11}Es(f)^p\leq E\lambda_\infty^{\frac{p}{2}}s(g)^p
\leq
(E\lambda_\infty^p)^{\frac{1}{2}}(Es(g)^{2p})^{\frac{1}{2}}.\end{align}
Using Lemmas 2.2(2), 2.4 and inequalities (3.3), (4.9) we obtain
\begin{align*}Es(f)^p &\leq C(E\lambda_\infty^p)^{\frac{1}{2}}(ES(g)^{2p})^{\frac{1}{2}}\\
&\leq C(E\lambda_\infty^p)^{\frac{1}{2}}(Eg^{*2p})^{\frac{1}{2}}\\
&\leq C(E\lambda_\infty^p)^{\frac{1}{2}}(E\lambda_\infty^p)^{\frac{1}{2}}=C (E\lambda_\infty^p),\end{align*}
this implies the first inequality of (4.8).

A similar argument gives the second inequality of (4.8).
\end{proof}

Now we consider some equivalent relations between five martingale
spaces under regular condition. In \cite{w}, Weisz called a
martingale $f=(f_n)$ is previsible, if there is a real number $R>0$
such that
\begin{align}\label{4.12}|df_n|\leq RE_{n-1}|df_n|, ~~ \forall n\geq 0,\end{align}  and proved that
if it holds for all martingale with the same constant $R$, then the
stochastic basis $(\Sigma_n)$ is regular (refer to
Garsia \cite{g} for its definition). He also proved if $(\Sigma_n)$ is regular, then all
the spaces $H^*_p,~ H^S_p,~ H^s_p, ~ \mathcal{D}_p$ and
$\mathcal{Q}_p $ are equivalent for $0<p<\infty.$ The following
theorem is the  variable exponent analogues.

\begin{thm}\label{45}~~   If $p\in \mathcal{P}$ with $1\leq p^-\leq
p^+<\infty$ and the stochastic basis $(\Sigma_n)$ is regular, then
the martingale spaces $H^*_{p(\cdot)},~ H^S_{p(\cdot)},~
H^s_{p(\cdot)}, ~ \mathcal{D}_{p(\cdot)}$ and
$\mathcal{Q}_{p(\cdot)}$ are equivalent.
\end{thm}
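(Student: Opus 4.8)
The plan is to assemble the desired equivalences from the results already proved, bringing in regularity only at two points. Recall that, by Weisz's result cited just above, regularity of $(\Sigma_n)$ is equivalent to the previsibility estimate $|df_n|\le R\,E_{n-1}|df_n|$ holding for every martingale with a fixed constant $R$; this is the only consequence of regularity I will use. From Theorem 3.1 one already has $\|f^*\|_{p(\cdot)}\sim\|S(f)\|_{p(\cdot)}$, i.e. $H^*_{p(\cdot)}\sim H^S_{p(\cdot)}$, while Theorem 4.3 gives $\mathcal{D}_{p(\cdot)}\sim\mathcal{Q}_{p(\cdot)}$, and Theorem 4.1 supplies the easy inequalities $\|f\|_{H^*_{p(\cdot)}}\le\|f\|_{\mathcal{D}_{p(\cdot)}}$ and $\|f\|_{H^S_{p(\cdot)}}\le\|f\|_{\mathcal{Q}_{p(\cdot)}}$. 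Hence to identify the four spaces $H^*_{p(\cdot)},H^S_{p(\cdot)},\mathcal{D}_{p(\cdot)},\mathcal{Q}_{p(\cdot)}$ it suffices to prove the single reverse inequality $\|f\|_{\mathcal{D}_{p(\cdot)}}\le C\|f\|_{H^*_{p(\cdot)}}$, and then to bring $H^s_{p(\cdot)}$ into the picture separately.

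For the reverse inequality I would use the Davis decomposition of Theorem 4.2(2): write $f=g+h$ with $\|g\|_{\mathcal{D}_{p(\cdot)}}\le C\|f\|_{H^*_{p(\cdot)}}$ and $\|h\|_{\mathcal{A}_{p(\cdot)}}\le C\|f\|_{H^*_{p(\cdot)}}$. Since $\|f\|_{\mathcal{D}_{p(\cdot)}}\le\|g\|_{\mathcal{D}_{p(\cdot)}}+\|h\|_{\mathcal{D}_{p(\cdot)}}$, the whole matter reduces to the embedding $\mathcal{A}_{p(\cdot)}\subset\mathcal{D}_{p(\cdot)}$ under regularity, i.e. to producing, for $h\in\mathcal{A}_{p(\cdot)}$, a predictable control of $(|h_n|)$ with the right norm. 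Here previsibility does the work: setting
$$\lambda_m=\sum_{k=0}^{m}|dh_k|+R\sum_{k=0}^{m}E_k|dh_{k+1}|,$$
one checks that $(\lambda_m)$ is nonnegative, increasing and adapted, and that $|h_n|\le\sum_{k=0}^{n-1}|dh_k|+R\,E_{n-1}|dh_n|\le\lambda_{n-1}$. Applying the convexity lemma (Lemma 2.3) to $\sum_k E_k|dh_{k+1}|$ gives $\|\lambda_\infty\|_{p(\cdot)}\le C\|\sum_k|dh_k|\|_{p(\cdot)}=C\|h\|_{\mathcal{A}_{p(\cdot)}}$, whence $\|h\|_{\mathcal{D}_{p(\cdot)}}\le C\|h\|_{\mathcal{A}_{p(\cdot)}}$. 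Combining the two estimates yields $\|f\|_{\mathcal{D}_{p(\cdot)}}\le C\|f\|_{H^*_{p(\cdot)}}$ and closes the loop among the four spaces.

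It remains to insert $H^s_{p(\cdot)}$. One direction is immediate from previsibility: $|df_n|^2\le R^2(E_{n-1}|df_n|)^2\le R^2 E_{n-1}|df_n|^2$ by Jensen's inequality, so summation gives $S(f)\le R\,s(f)$ pointwise and hence $\|f\|_{H^S_{p(\cdot)}}\le R\|f\|_{H^s_{p(\cdot)}}$. For the opposite direction I would invoke Theorem 4.4, which gives $\|f\|_{H^s_{p(\cdot)}}\le C\|f\|_{\mathcal{D}_{p(\cdot)}}$ (stated for $f_0=0$, but the contribution of $f_0$ is dominated by $\|f^*\|_{p(\cdot)}$ and is harmless), and then use the already-established $\mathcal{D}_{p(\cdot)}\sim H^S_{p(\cdot)}$ to conclude $\|f\|_{H^s_{p(\cdot)}}\le C\|f\|_{H^S_{p(\cdot)}}$. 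Together with $S\le R\,s$ this gives $H^s_{p(\cdot)}\sim H^S_{p(\cdot)}$, completing the chain and proving all five norms equivalent.

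I expect the genuinely delicate step to be the embedding $\mathcal{A}_{p(\cdot)}\subset\mathcal{D}_{p(\cdot)}$: constructing an increasing, adapted, one-step-predictable control whose limit remains bounded in the variable exponent norm is where both previsibility and the convexity lemma are essential. It is precisely the place where the naive candidates fail, since a pointwise bound by $f^*$ cannot be pushed back one time step into $\Sigma_{n-1}$, while a bound by $\sum_k|df_k|$ is not controlled by $\|f^*\|_{p(\cdot)}$; the Davis splitting is what isolates the part on which the $\mathcal{A}$-estimate is available.
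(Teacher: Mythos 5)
Your proof is correct, and it closes the crucial loop by a genuinely different route than the paper. The paper never touches the Davis decomposition here: it uses previsibility directly on the square function, noting $S_n(f)\le S_{n-1}(f)+RE_{n-1}|df_n|\le S_{n-1}(f)+RE_{n-1}S_n(f)$, so that $\bigl(S_{n-1}(f)+RE_{n-1}S_n(f)\bigr)_{n\ge0}$ serves as a predictable control of $(S_n(f))_{n\ge0}$; the convexity lemma applied to the differences $S_n(f)-S_{n-1}(f)$ then yields $\|f\|_{\mathcal{Q}_{p(\cdot)}}\le C\|f\|_{H^S_{p(\cdot)}}$, and the four-space loop closes through the chain $\|f\|_{H^S_{p(\cdot)}}\le C\|f\|_{H^*_{p(\cdot)}}\le C\|f\|_{\mathcal{D}_{p(\cdot)}}\le C\|f\|_{\mathcal{Q}_{p(\cdot)}}\le C\|f\|_{H^S_{p(\cdot)}}$ via Theorems 3.1, 4.1 and 4.3. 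You instead prove the companion inequality $\|f\|_{\mathcal{D}_{p(\cdot)}}\le C\|f\|_{H^*_{p(\cdot)}}$ by invoking Theorem 4.2(2) and establishing the embedding $\mathcal{A}_{p(\cdot)}\subset\mathcal{D}_{p(\cdot)}$ under regularity; your control $\lambda_m=\sum_{k\le m}|dh_k|+R\sum_{k\le m}E_k|dh_{k+1}|$ uses exactly the same two ingredients (previsibility plus Lemma 2.3), only deployed on $\sum_k|dh_k|$ rather than on $S(f)$, and both proofs treat $H^s_{p(\cdot)}$ identically ($S(f)\le Rs(f)$ one way, Theorem 4.4 plus the established equivalences the other). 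What the paper's route buys is economy: one pointwise inequality and no appeal to the decomposition theorem. What your route buys is a reusable intermediate fact ($\mathcal{A}_{p(\cdot)}\hookrightarrow\mathcal{D}_{p(\cdot)}$ under regularity) and a control that is honestly increasing, whereas the paper's candidate $S_n(f)+RE_nS_{n+1}(f)$ is not obviously monotone in $n$ and strictly speaking should be replaced by its running maximum. You are also more careful than the paper at one point: Theorem 4.4 is stated only for $f_0=0$, which the paper applies without comment. Do note, though, that your remark that the $f_0$ contribution is ``dominated by $\|f^*\|_{p(\cdot)}$'' genuinely needs regularity: the $n=0$ term of $s(f)$ is the constant $(E|f_0|^2)^{1/2}$, which for general $f$ need not be controlled by $\|f^*\|_{p(\cdot)}$ when $p^-<2$; under previsibility at $n=0$ one has $|f_0|\le RE|f_0|$, whence $(E|f_0|^2)^{1/2}\le RE|f_0|\le R\|f^*\|_1\le 2R\|f^*\|_{p(\cdot)}$, and your reduction goes through.
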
\vskip .1cm

\begin{proof}~~ Under the regular condition,  we notice  that
$$S_n(f)\leq S_{n-1}(f)+|df_n|\leq S_{n-1}(f)+ RE_{n-1}|df_n| \leq S_{n-1}(f)+ RE_{n-1}S_n(f),$$
that is, $(S_{n-1}(f)+ RE_{n-1}S_n(f))_{ n\geq0}$ is a predictable
control of $(S_n(f))_{n\geq0}.$  Since $$E_{n-1}S_n(f)\leq
S_{n-1}(f)+E_{n-1}(S_n(f)-S_{n-1}(f)),$$  Lemma 2.3 gives
$$\|f\|_{\mathcal{Q}_{p(\cdot)}}\leq 2\|S(f)\|_{p(\cdot)}
+ R\|\sum_{n=0}^\infty E_{n-1}(S_n(f)-S_{n-1}(f))\|_{p(\cdot)}\leq C
\|S(f)\|_{p(\cdot)},$$ where $C$ is some constant depending only on
$p$ and $R$. Then it follows from Theorems 3.1 and 4.3 that
\begin{align}\label{4.13}\|f\|_{H^S_{p(\cdot)}}\leq C\|f\|_{H^*_{p(\cdot)}}
\leq C\|f\|_{\mathcal{D}_{p(\cdot)}}\leq
C\|f\|_{\mathcal{Q}_{p(\cdot)}}\leq C\|f\|_{H^S_{p(\cdot)}}.\end{align}

  It remains to
prove
\begin{align}\label{4.14}C^{-1} \|f\|_{H^s_{p(\cdot)}} \leq
 \|f\|_{H^S_{p(\cdot)}}\leq C\|f\|_{H^s_{p(\cdot)}}.\end{align}
The first inequality comes from Theorem 4.4 and (4.13). Due to the
regularity, we have $S_n(f)\leq R s_n(f)$ and $S(f)\leq R s(f),$ so
the second inequality follows directly. The proof is complete.
\end{proof}

\medskip



\end{document}